\newcommand\ifpdf
\renewcommand{\a}{{\bf a}}
\renewcommand{\b}{{\bf b}}
\renewcommand{\c}{{\bf c}}
\renewcommand{\d}{{\bf d}}
\newcommand{\e}{{\bf e }}
\renewcommand{\t}{{\bf t}}
\renewcommand{\v}{{\bf v}}
\newcommand{\x}{{\bf x }}
\newcommand{\y}{{\bf y}}
\newcommand{\z}{{\bf z }}
\newcommand{\0}{{\bf 0}}
\newcommand{\R}{\mathbb{R}}
\newcommand{\N}{\mathbb{N}}
\newcommand{\Z}{\mathbb{Z}}
\newcommand{\Q}{\mathbb{Q}}
\DeclareMathOperator{\vol}{vol}
\DeclareMathOperator{\conv}{conv}
\newtheorem{theorem}{Theorem}[section]
\newtheorem{corollary}[theorem]{Corollary}
\newtheorem{lemma}[theorem]{Lemma}
\theoremstyle{definition}
\newtheorem{remark}[theorem]{Remark}
\theoremstyle{definition}
\newtheorem{definition}[theorem]{Definition}
\begin{document}

\title{A new Lenstra-type Algorithm for Quasiconvex Polynomial Integer Minimization with Complexity $2^{O(n\log n)}$\\ }
\date{\today}
\author{Robert Hildebrand and Matthias K\"oppe}
\maketitle
\begin{abstract}
We study the integer minimization of a quasiconvex polynomial with quasiconvex polynomial constraints.  We propose a new algorithm that is an improvement upon the best known algorithm due to Heinz (\textit{Journal of Complexity}, 2005). This improvement is achieved by applying a new modern Lenstra-type algorithm, finding optimal ellipsoid roundings, and considering sparse encodings of polynomials.  For the bounded case, our algorithm attains a time-complexity of $s (r l M d)^{O(1)} 2^{2n\log_2(n) + O(n)}$ when $M$ is a bound on the number of monomials in each polynomial and $r$ is the binary encoding length of a bound on the feasible region. In the general case, $s l^{O(1)} d^{O(n)} 2^{2n\log_2(n) +O(n)}$. In each we assume $d\geq 2$ is a bound on the total degree of the polynomials and $l$ bounds the maximum binary encoding size of the input. 
\end{abstract}

\section{Introduction}
We study the integer minimization of a quasiconvex polynomial with quasiconvex polynomial constraints. That is, given $\hat{F}, F_1, \dots , F_s \in \Z[\x] = \Z[x_1, \dots, x_n]$ quasiconvex polynomials with integer coefficients, we wish to solve the following problem
\begin{equation}
\label{minimization}
\begin{array}{ll}
\min & \hat{F}(\x)\\
\text{subject to} & F_i(\x) < 0 \text{ for all } i = 1, \dots , s\\
& \x \in \Z^n.
\end{array}
\end{equation}
A function $F\colon \R^n \to \R$ is called \textit{quasiconvex} if for every $\alpha \in \R$, the lower level set $\{\x \in \R^n \colon F(\x) \leq \alpha\}$ is a convex subset of $\R^n$.   Some quasiconvex programs reduce nicely to convex programs, see for instance, \cite{boyd2004}, but this is not likely to be the case in general.  Studying quasiconvex integer minimization opens up a larger class of functions that we can optimize over.

We approach the optimization problem by setting $F_0 = \hat{F} - z^*$ and solving the feasibility problem over $Y \cap \Z^n$, where
\begin{equation}
\label{Y}
Y := \left\{ \x \in \R^n : F_i(\x) < 0 , \ i = 0, 1,\dots,s \right\},
\end{equation}
and applying binary search on objective values until we find an optimal solution. Strict inequalities are used to ensure that if $Y$ is non-empty, then it is full dimensional in $\R^n$. Since $F_i(\x) \in \Z$ for all $\x \in \Z^n$, problem \eqref{minimization} can be easily formulated by weak inequalities. This follows from the observation that the inequalities $z < 0$ and $z + 1\leq 0$ are equivalent for $z \in \Z$.

We use a modern Lenstra-type algorithm to solve the integer feasibility problem.
Lenstra's algorithm was the first algorithm to solve integer linear optimization in polynomial time when the dimension is fixed.  It can be applied to any  family of convex sets $\mathcal{C}$ in $\R^n$ provided that we can solve the ellipsoid rounding problem over sets in $\mathcal{C}$.  Khachiyan and Porkolab \cite{khach2000} showed that Lenstra's algorithm could be generalized to operate on convex semialgebraic sets, having time-complexity of $l^{O(1)}(sd)^{O(n^4)}$. For the specific case of quasiconvex polynomial minimization, the current best algorithm is due to Heinz and has time-complexity of $s l^{O(1)} d^{O(n)} 2^{O(n^3)}$, where $d \geq 2$ is an upper bound on the total degree of the polynomials and $l$ is the maximum binary encoding size of all coefficients.   

Our improvement over Heinz's algorithm comes primarily from the modern Lenstra-type algorithm that we present.  Heinz developed a shallow cut separation oracle to show that Lenstra's original algorithm applies to the quasiconvex minimization problem (\ref{minimization}).  We generalize Heinz's shallow cut separation oracle to show that the modern Lenstra-type algorithm works for the quasiconvex minimization problem (\ref{minimization}). We also provide a structure of evaluating polynomials that exploits sparsity, which allows us to state a more precise complexity of the algorithm based on the number of monomials given in the input.

\begin{theorem}
\label{main}
Let $\hat{F}, F_1, \dots, F_s \in \Z[\x]$ be sparsely encoded quasiconvex
polynomials. Let $d\geq 2$ be an upper bound for the degree of the
polynomials $F_0, \dots, F_{s}$, let $M$ be the maximum number of monomials
in each, and let the binary length of the coefficients be bounded by $l$.
Then there exists an algorithm for the minimization problem
\eqref{minimization} which computes a minimum point or confirms that such a
point does not exist. 
\begin{enumerate}[\rm(a)]
\item 
If the continuous relaxation of the feasible region is bounded such
that $r$ is the binary encoding length of a bound on that region with $r
\leq l d^{O(n)}$, then the algorithm has time-complexity of $s (r l M
d)^{O(1)} 2^{2n\log_2(n) + O(n)}$ and output-complexity of
$(l+r)(dn)^{O(1)}$.
\item Otherwise, the algorithm has time-complexity of $s
l^{O(1)} d^{O(n)} 2^{2n\log_2(n) }$ and output-complexity of $l d^{O(n)}$.
\end{enumerate}
\end{theorem}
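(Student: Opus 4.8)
The plan is to reduce the minimization \eqref{minimization} to a bounded number of integer feasibility questions ``is $Y \cap \Z^n$ nonempty?'' for $Y$ as in \eqref{Y}, and to answer each with a modern Lenstra-type algorithm driven by a shallow separation oracle. For the reduction I would set $F_0 := \hat{F} - z^*$ and bisect on the objective value $z^*$; since every $F_i$ is integer-valued on $\Z^n$, the equivalence of $z<0$ and $z+1\le 0$ on $\Z$ lets us work with the open, hence full-dimensional (when nonempty), set $Y$ while still solving the weak-inequality problem. One first brackets the attainable objective values: in case~(a) a feasible $\x$ has $\|\x\|_\infty \le 2^{r}$, so $|\hat{F}(\x)| \le 2^{O(l+rd+\log M)}$ and $O(l+rd+\log M)$ rounds of bisection suffice; in case~(b) the standard singly-exponential bounds on points of semialgebraic sets bound any feasible point by $2^{l d^{O(n)}}$, giving $l d^{O(n)}$ rounds (the downward search also certifies the unbounded-below case, in which no minimum exists). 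These same size bounds yield the stated output complexity for the returned point and value.

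For a single feasibility instance I would run the Lenstra-type recursion on the convex body $Y$. Its engine is the shallow-cut ellipsoid method of Gr\"otschel, Lov\'asz, and Schrijver: from a shallow separation oracle for $Y$, an a~priori outer radius, and a volume threshold below which $Y$ is declared empty --- all available from the encoding bounds --- it returns either a point of $Y$ (which we then try to round to a lattice point) or an ellipsoidal rounding $\x_0 + E \subseteq Y \subseteq \x_0 + \rho\,E$ with $\rho$ polynomial in $n$, which one may push toward the L\"owner--John optimum by an auxiliary optimization over roundings. The oracle is the quasiconvex shallow separation oracle, generalizing Heinz's: at a query point $\y$ we evaluate every $F_i(\y)$; if all are negative we have a feasible point, and otherwise, for a violated $F_i$, quasiconvexity forces $\{F_i<0\}$ --- hence $Y$ --- into the halfspace $\{\x:\langle \nabla F_i(\y),\x-\y\rangle<0\}$, so a vector built from $\nabla F_i(\y)$ yields the required shallow cut, with the depth bookkeeping and the degenerate case $\nabla F_i(\y)=0$ handled as in Heinz's construction and with all ellipsoid matrices and iterates kept of polynomial encoding length.

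Given a rounding of $Y$, the recursion branches. Transforming $E$ to the unit ball and $\Z^n$ to the corresponding lattice, I would compute an \emph{exact} (HKZ-/Minkowski-)reduced basis of that lattice in time $2^{O(n)}$ and use it to produce a lattice direction $\c$ along which the width of $Y$ is bounded by $\rho$ times the ellipsoid width; by the flatness theorem with the best known flatness constant, if $Y$ has no lattice point then such a $\c$ satisfies $w_\c(Y) = O(n^2)$, while a sufficiently large inscribed ball already contains a lattice point. We thus either find a lattice point or recurse on the $O(n^2)$ slices $Y\cap\{\langle\c,\x\rangle=t\}$, $t\in\Z$. The point --- this is what replaces Lenstra's LLL step, whose $2^{\Theta(n)}$ per-level loss is what forces $2^{O(n^2)}$-type bounds in earlier algorithms --- is that exact reduction loses only a $\mathrm{poly}(n)$ factor in the width estimate, so the branching stays of degree two in $n$. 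With recursion depth $\le n$ and branching $O(n^2)$ the search tree has $(O(n^2))^n = 2^{2n\log_2 n + O(n)}$ nodes; the per-node cost is the $2^{O(n)}$ lattice computation plus $\mathrm{poly}(n,l,r,M,d)$ ellipsoid iterations, each costing $\mathrm{poly}(n,l,r)$ beyond evaluating the $s$ polynomials and their gradients. Here the sparse encoding enters: a polynomial with $M$ monomials of degree $\le d$ evaluated at a point of bit-size $O(r)$ costs $(Mdnlr)^{O(1)}$ arithmetic via repeated squaring on the exponent vectors, instead of the $d^{O(n)}$ needed to write down a dense representation. Multiplying the bisection rounds, the tree size, the ellipsoid iterations, and the evaluations --- and using the crude $d^{O(n)}$ bounds for the parameters in the unbounded case --- gives the two stated running times.

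The main obstacle is twofold. First, getting the shallow separation oracle exactly right with controlled bit-complexity: certifying that the gradient-based cut has the depth the ellipsoid method demands, dealing with points where $\nabla F_i$ vanishes but $F_i$ is not minimized, and keeping all produced numbers polynomially bounded --- quasiconvexity (as opposed to convexity) gives no global supporting-hyperplane inequality, only the geometry of the level sets. Second, threading the ``near-optimal rounding $+$ exact lattice reduction $+$ best flatness constant'' analysis so that the branching is honestly of degree two in $n$ and every intermediate object (rounding ellipsoids, transformed lattices, reduced bases) has polynomial encoding length; this is exactly where the improvement from $2^{O(n^3)}$ to $2^{2n\log_2 n + O(n)}$ is bought, and it requires that the exact-reduction subroutine's $2^{O(n)}$ cost and bit-growth never leak into the exponent of $n$.
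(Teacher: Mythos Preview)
Your high-level strategy---bisection on the objective, then a Lenstra-type recursion driven by the shallow-cut ellipsoid method with a Heinz-style quasiconvex oracle, branching factor $\omega(n)\beta(n)$ per level, and the lattice step done in single-exponential time---is exactly the paper's approach, and the accounting $(O(n^2))^n = 2^{2n\log_2 n + O(n)}$ is the right target.

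There is, however, a genuine gap at the rounding step, and it is precisely where the constant $2$ in the exponent is bought. The textbook shallow-cut ellipsoid method (as used by Heinz) only returns a $\beta(n)=O(n^{3/2})$-rounding, because the oracle tests the $2n$ cross-polytope vertices and the ball inscribed in their convex hull has radius $\Theta(n^{-3/2})$. Combined with the ellipsoid flatness bound $\omega(n)=n$ from Banaszczyk's transference theorem, that gives branching $O(n^{5/2})$ per level and hence $2^{(5/2)n\log_2 n + O(n)}$ nodes, not $2^{2n\log_2 n}$. Your phrase ``which one may push toward the L\"owner--John optimum by an auxiliary optimization over roundings'' is exactly where the argument fails: the paper surveys such schemes (Nesterov, Khachiyan, Anstreicher) and observes that none is known to deliver an $O(n)$-rounding for a general convex set given only by an oracle. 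The paper's fix---one of its main technical contributions---is to build the tighter rounding \emph{into the separation oracle itself}: instead of $2n$ test points one uses $m=n\,2^n$ points coming from Kochol's sphere-approximating polytopes (together with a rational $\epsilon$-approximation argument to avoid irrational coordinates), so that when all test points lie in $Y$ their convex hull contains a ball of radius $\Theta(1/n)$ and one obtains $\beta(n)=O(n)$ directly. The extra $2^{O(n)}$ polynomial evaluations per oracle call are harmless, being dominated by the $2^{O(n)}$ lattice work already budgeted per node.

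A minor difference worth noting: you invoke an HKZ/Minkowski-reduced basis, whereas the paper calls the Micciancio--Voulgaris deterministic $2^{2n}$ algorithms for SVP and CVP directly---SVP on $\mathcal{L}((A^{1/2})^T)$ to obtain the flatness direction, and CVP on the dual lattice $\mathcal{L}(A^{-1/2})$ with target $A^{-1/2}\a$ to recover a feasible integer point when the inner ellipsoid is wide. These are equivalent for the present purpose (HKZ in $2^{O(n)}$ is itself obtained via single-exponential SVP), but the paper's formulation makes the role of CVP in the ``feasible point found'' branch explicit, which your sketch leaves implicit.
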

\indent For $d = O(1)$, this complexity is $s l^{O(1)} 2^{2n\log_2(n) + O(n)}$.\\
\indent If $d = O(n^k)$ for some $k >0$, then the complexity becomes $s l^{O(1)} 2^{O(n \log(n))}$.\\

Lenstra's algorithm solves the integer feasibility problem for a convex set $Y$ by first finding a pair of concentric ellipsoids, $E, E' = \frac{1}{\beta} E$ such that $E' \subseteq Y\subseteq E$, where $E'$ is a scaled version of $E$ with respect to the center. If we can determine that $E' \cap \Z^n$ is non-empty, then we are done. Otherwise, we find a direction of minimal width and branch into integer hyperplanes
that cover $E$, creating lower dimensional subproblems to solve. The same approach is then applied to solving each lower dimensional subproblem. \\

The complexity of Lenstra's algorithm is guided primarily by the number of subcases that it must evaluate. As shown in section $4$, the number of subcases in each step is bounded by $2 \omega(n) \beta(n) + 3$ where $\omega(n)$ is called the lattice width direction of the inner ellipsoid and $\beta(n)$ is the rounding radius or scaling between the concentric ellipsoids that are obtained. The lattice width $\omega(n)$ will be used to determine if $E' \cap \Z^n \neq \emptyset$. This leads to a total number of subcases bounded by 
$$
\text{total \# of subcases}\ \leq \ \prod_{i=1}^n 2 \omega(i) \beta(i) + 3.
$$ 
\begin{figure}
\begin{center}
\begin{picture}(0,0)%
\includegraphics{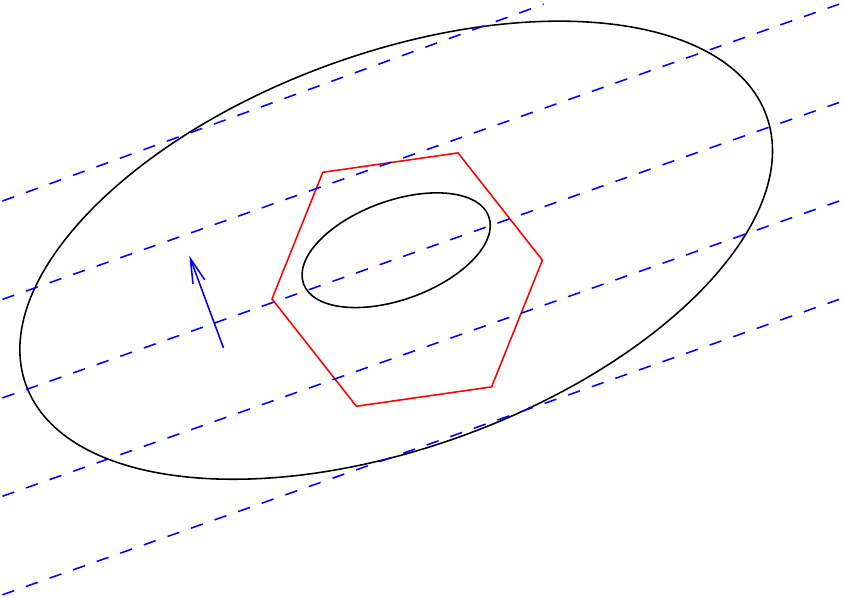}%
\end{picture}%
%
%
\setlength{\unitlength}{3947sp}%
\begingroup\makeatletter\ifx\SetFigFont\undefined%
\gdef\SetFigFont#1#2#3#4#5{%
  \reset@font\fontsize{#1}{#2pt}%
  \fontfamily{#3}\fontseries{#4}\fontshape{#5}%
  \selectfont}%
\fi\endgroup%
\begin{picture}(4040,2859)(229,-2248)
\put(2101,-501){\makebox(0,0)[lb]{\smash{{\SetFigFont{12}{14.4}{\rmdefault}{\mddefault}{\updefault}{\color[rgb]{0,0,0}$E'$}%
}}}}
\put(2691,-251){\makebox(0,0)[lb]{\smash{{\SetFigFont{12}{14.4}{\rmdefault}{\mddefault}{\updefault}{\color[rgb]{1,0,0}$Y$}%
}}}}
\put(3381, 19){\makebox(0,0)[lb]{\smash{{\SetFigFont{12}{14.4}{\rmdefault}{\mddefault}{\updefault}{\color[rgb]{0,0,0}$E$}%
}}}}
\end{picture}%

\caption{Main idea of Lenstra's Algorithm}
\end{center} 
\end{figure}

The idea of the modern Lenstra-type algorithm is not new.  Kannan was the first to reduce the number of subcases by solving lattice problems.   Recently, extraordinary new lattice algorithms and bounds from the geometry of numbers reveal a better complexity.  A new idea we present is finding an ellipsoid rounding with an optimal rounding radius of $\beta(n) = O(n)$.   This is done by using results for the maximal radius of a ball inscribed in the convex hull of $m$ points on the sphere of radius 1.  We will explain these improvements and how they affect the complexity of our algorithm.

In section 2 we will discuss ellipsoid rounding from the point of view of the shallow cut ellipsoid method and show how we improve $\beta(i)$ from $O(n^{3/2})$ to $O(n)$, which has never been done before in Lenstra's algorithm. This improvement allows for a better exponential coefficient in the resulting complexity.

In section 3 we will explain how Kannan's improvement of Lenstra's algorithm reduces the number of subcases exponentially. This section will begin with a discussion of lattice theory, flatness directions, and the geometry of numbers, and we will reveal that $\omega(n)$ can be improved from $2^{O(n^2)}$ as used in the original Lenstra algorithm \cite{lenstra83}  along with Khachiyan and Porkolab\cite{khach2000} and in Heinz \cite{heinz05}, to $O(n)$ as stated in Eisenbrand \cite{50years}.  The important feature that we point out is how new lattice algorithms allow this computation to be done determistically in $2^{O(n)}$ time, as opposed to $2^{O(n\log n)}$, creating a better overall complexity.
  
In section 4 we state our modern Lenstra-type algorithm more precisely and explain its complexity.

In section 5 we will discuss polynomial encoding and quasiconvex polynomials and then show how to make shallow cuts for sets given by quasiconvex polynomials.  This allows Lenstra's algorithm to be applied to our problem.

In section 6 we discuss the proof of Theorem \ref{main}.

\section{Ellipsoid Rounding}

The first step of Lenstra's algorithm is to find a pair of concentric ellipsoids, one inside and one containing the feasible region. 
We will write ellipsoids in the form $E(A,\a) = \{ \x\in \R^n : ||\x-\a||_{A^{-1}} \leq 1\}$ where $|| \v ||_{B} := \sqrt{\v^T B \v}$, $A \in \R^{n\times n}$ is a positive definite matrix and $\a \in \R^n$.
For example, $E(\alpha^2 I, \0)= B(\alpha,\0)$ is the ball of radius $\alpha$ centered at the origin.  Let $Y$ be a convex set. The ellipsoid $E(A,\a)$ is a \emph{$\beta$-rounding} of $Y$ if 
$$
E\bigl(\tfrac{1}{\beta^2} A,\a\bigr) \subseteq Y \subseteq E(A,\a)
$$
where $\beta$ is called the \emph{radius} of the rounding\cite{nesterov07}. John~\cite{john48} showed there exists a $n$-rounding for any convex set. Conversely, for a simplex, a $n$-rounding is the best possible rounding. Finding an optimal rounding will reduce the number of subcases that need to be analyzed in Lenstra's algorithm. In section 4 we will show precisely where $\beta$ affects the complexity of integer optimization. In this section we will explain how to construct a $O(n)$-rounding. 

There are several methods for ellipsoid rounding. Nesterov describes an algorithm to obtain a $\gamma n$-rounding, $\gamma > 1$ for an arbitrary convex set and also how to obtain a $\gamma \sqrt{n}$-rounding for centrally symmetric convex sets~\cite{nesterov07}, but each is based on the assumption that a difficult optimization problem can be solved. For a specific case, Nesterov uses linear programming, whereas we would need to maximize over nonlinear polynomials. In our model, no supplementary optimization problem need be solved. Ellipsoid roundings have also been studied by Khachiyan~\cite{khach96}, which was improved by~\cite{kumar05} and~\cite{todd07}. Some other methods use a volumetric barrier~\cite{anstre99, anstre02}.
Unfortunately, none of these have been shown to round general convex sets.

We will use the original approach, which is to employ the \emph{shallow cut ellipsoid method}\cite{schriver88}. This can be applied to any class of convex sets $\mathcal C$ for which there exists a \textit{shallow cut separation oracle}.
\begin{definition}[Shallow cut separation oracle \cite{schriver88}]
A \textit{shallow cut separation oracle} for a convex set $Y\subset \R^n$ is an oracle which, for an input $\a\in \Q^n$ and a rational positive definite matrix $A$, outputs one of the following:
\begin{enumerate}
\item verification that $E(A,\a)$ is a $\beta$-rounding of $Y$ or,
\item a vector $\c \in \Q^n$ such that the half-space 
$$\left\{\x\in \R^n \colon \c^T\x \leq \c^T \a + \frac{1}{n+1} \sqrt{\c^T A^{-1}\c}\right\} \supseteq Y \cap E(A,\a).$$
\end{enumerate}
\end{definition}
\begin{theorem}[Shallow Cut Ellipsoid Method\cite{schriver88}]
There exists an oracle-polynomial time algorithm that for any number $\epsilon > 0$ and for any circumscribed closed convex set $Y \subset B(R,\0)$ given a shallow cut separation oracle finds a positive definite matrix $A \in \Q^{n\times n}$ and a point $\a \in \Q^n$ such that one of the following holds:\\
(i) $E(A,\a)$ is a $\beta$-rounding of $Y$.\\
(ii) $Y\subset E(A,\a)$ and $\text{vol}(E(A,\a)) \leq \epsilon$\\
This algorithm runs in time oracle-polynomial in $n + \langle R \rangle + \langle \epsilon \rangle$.\\
\end{theorem}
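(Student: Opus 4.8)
The plan is to recognize this statement as the \emph{shallow-cut variant of the classical ellipsoid method} and to prove it by running that method with a volume potential argument. I would initialize with $E_0 = B(R,\0) = E(R^2 I,\0)$, which contains $Y$ by the circumscription hypothesis, and maintain the invariant $Y \subseteq E_k = E(A_k,\a_k)$ at every step. In iteration $k$, I call the shallow cut separation oracle on the pair $(\a_k, A_k)$. If it returns verification that $E(A_k,\a_k)$ is a $\beta$-rounding of $Y$, the algorithm halts and outputs $(A_k,\a_k)$, which is outcome~(i). Otherwise the oracle returns $\c_k \in \Q^n$ with $Y \cap E(A_k,\a_k) \subseteq H_k := \{\,\x : \c_k^T\x \le \c_k^T\a_k + \tfrac{1}{n+1}\sqrt{\c_k^T A_k^{-1}\c_k}\,\}$; combined with the invariant, $Y \subseteq H_k \cap E_k$, and I take $E_{k+1}$ to be the minimum-volume ellipsoid containing $H_k \cap E_k$, slightly enlarged so that $A_{k+1}$ and $\a_{k+1}$ remain representable with boundedly many bits. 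The explicit update is the standard shallow-cut formula, obtained by applying the affine map that carries $E_k$ to $B(1,\0)$ and $\c_k$ to $-e_1$, writing down the L\"owner--John ellipsoid of the truncated ball $B(1,\0) \cap \{\,x : x_1 \ge -\tfrac{1}{n+1}\,\}$, and transforming back.

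Correctness of the two outcomes is then almost immediate. Outcome~(i) is exactly what the oracle certifies. For outcome~(ii), the containment $Y \subseteq E_k$ holds by induction: $Y \subseteq E_0$ by hypothesis, and if $Y \subseteq E_k$ then $Y \subseteq Y \cap E_k \subseteq H_k$, hence $Y \subseteq H_k \cap E_k \subseteq E_{k+1}$. So the only real content is termination. Here I would invoke the classical volume estimate for the shallow-cut update: the minimum-volume ellipsoid of $H_k \cap E_k$ has volume at most $e^{-c/n^3}\,\vol(E_k)$ for an absolute constant $c>0$ (a one-variable optimization which, after the normalization above, reduces to the geometry of a single spherical cap; the exact power of $n$ is immaterial). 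Since $\vol(E_0) = R^n\,\vol(B(1,\0))$, after $N = O\!\big(n^3\log(\vol(E_0)/\epsilon)\big)$ iterations the volume would fall below $\epsilon$, so the algorithm halts: in outcome~(i) as soon as the oracle certifies a rounding, or otherwise in outcome~(ii) by iteration $N$ at the latest. This $N$ is polynomial in $n + \langle R\rangle + \langle\epsilon\rangle$, each iteration performs one oracle call plus a polynomial amount of rational arithmetic, and the whole computation is thus oracle-polynomial in $n + \langle R\rangle + \langle\epsilon\rangle$.

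The main obstacle is the rational-arithmetic bookkeeping required to make the algorithm genuinely polynomial time, as opposed to merely using polynomially many oracle calls: the exact L\"owner--John update can produce matrices $A_{k+1}$ whose encoding length grows geometrically, so at each step one must round the entries of $A_{k+1}$ to $\mathrm{poly}(n,\langle R\rangle,\langle\epsilon\rangle)$ bits while \emph{enlarging} the ellipsoid just enough to preserve the invariant $Y \subseteq E_{k+1}$. One then has to verify that this enlargement contributes a per-step volume growth factor of only $1 + o(1/n^3)$, so that it is dominated by the $e^{-c/n^3}$ shrinkage and the net volume still decreases geometrically; a convenient way to organize this is to perform the rounding only once per block of $\Theta(n^3)$ genuine shrinkage steps. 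Everything else---the cap geometry and a geometric-series estimate on volumes---is routine, and a fully detailed treatment is given in~\cite{schriver88}.
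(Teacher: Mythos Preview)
The paper does not prove this theorem: it is stated with a citation to~\cite{schriver88} and no proof is given, since it is a standard result from the literature (the shallow-cut ellipsoid method of Gr\"otschel--Lov\'asz--Schrijver). Your proposal is a faithful sketch of exactly the argument found in that reference---initialize with the circumscribing ball, maintain the containment invariant, shrink via the L\"owner--John ellipsoid of the shallow-cut slab, and bound the number of iterations by a volume potential---so there is nothing substantive to compare.

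One minor remark: the per-step volume contraction for a shallow cut at depth $-\tfrac{1}{n+1}$ is $e^{-c/n^2}$ rather than $e^{-c/n^3}$ (see, e.g., Lemma~3.2.10 and Theorem~3.2.1 in~\cite{schriver88}); you are right that the exact power is immaterial for the oracle-polynomial conclusion, but since you flagged this as the ``one-variable optimization'' it is worth getting the order correct. Your handling of the rational-arithmetic rounding issue is also the standard one and is adequately addressed.
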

The main difficulty in generalizing Lenstra's algorithm to different classes of convex sets is creating shallow cuts.  Khachiyan and Porkolab show that for an arbitrary semialgebraic set, a shallow cut can be computed in $l^{O(1)} (s d)^{O(n^3)}$ time\cite[Lemma 4.1]{khach2000}.  In our algorithm, we intend to do much better for the specific case of quasiconvex polynomials, although this complexity discussion will be saved for section 5 when we discuss quasiconvex polynomials. 

Suppose that we have an ellipsoid $E(A,\a) \supset Y$. Ellipsoids are affine transformations of the unit ball; that is, there exists an affine map $\tau:\R^n \to \R^n$ such that $\tau(E(A,\a)) = B(1,\0)\supset \tau(Y)$. The standard method of creating a shallow cut separation oracle is to observe that points in $B( \frac{1}{n+1},\0)\setminus\tau(Y)$ will often admit a shallow cut. For the case when $Y$ is a polyhedron, finding such a point directly admits a shallow cut, whereas in the case of quasiconvex minimization, Heinz showed that with a little more work, we could find a shallow cut. This will be explained in detail in section \ref{quasiconvexMinimization}.
  The remainder of this section will focus on realizing a $\beta$-rounding. 

On the other hand, if we find a set of points $V$ within the ball of radius $\frac{1}{n+1}$ that are in $\tau(Y)$, then any ball $B(0,\frac{1}{\beta}) \subset \conv(V)$ will admit a rounding since $\tau^{-1}(B(\frac{1}{\beta},\0)) = E(\frac{1}{\beta^2}A, \a) \subset Y \subset E(A,\a)$. The rounding radius $\beta$ is then dependent on the maximum inscribed sphere inside $\conv(V)$, as seen in Figure \ref{radius}.\\

Using the cross-polytope $$ V_{L} = \{\pm \frac{1}{n+1} \e_i : i=1, \dots , n\},$$ where $\e_i$ is the $i^{\mathrm{th}}$ unit vector,~\cite{schriver88} obtains a $O(n^{3/2})$-rounding of a polytope, which is also used in \cite{khach2000}.
Heinz used this idea to obtain a rounding of a convex region given by quasiconvex polynomials~\cite{heinz05}.
In order to overcome numerical issues of requiring exact arithmetic, Heinz chose the points $$V_H= \{\pm \lambda_i \e_i: i=1, \dots, n\}$$ where $\frac{1}{n+3/2} < \lambda_i < \frac{1}{n+1}$. Heinz's choice also obtains an $O(n^{3/2})$-rounding. \begin{figure}
\begin{center}
\input{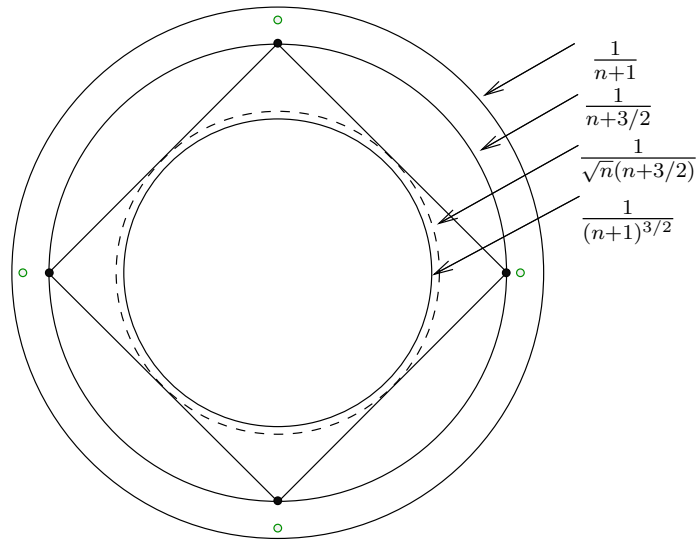}
\caption{Under the affine transformation, the open dots are test points for Heinz's separation oracle. This method avoids the numerical issues of finding exact square roots.}
\end{center}
\end{figure} 
We will generalize and improve Heinz's method by applying sphere approximating polytopes of Kochol~\cite{kochol94} that attain an optimal bound within a constant factor. A note from Kochol, modified to give more detail, shows the following result.
\begin{figure}
\begin{center}
\label{radius}
\ifpdf
\input{radiuspdf.tex}
\else
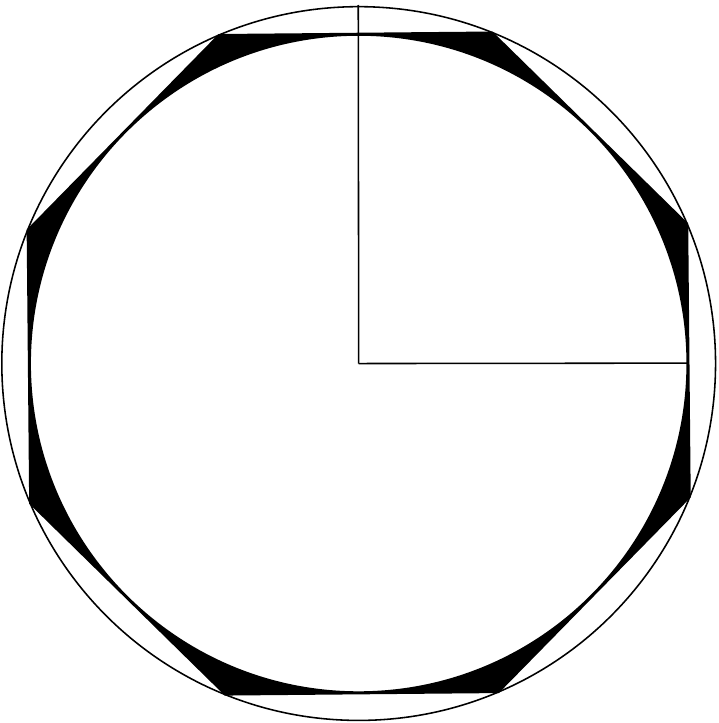
\fi
\caption{The maximal radius $\rho(n,m)$ of a ball (with center at the origin) contained in the convex hull of $m$ points chosen from the $n$-dimensional sphere of radius $1$. }
\end{center}
\end{figure}
\begin{figure}
\begin{center}
\ifpdf
\input{approxmodelpdf.tex}
\else
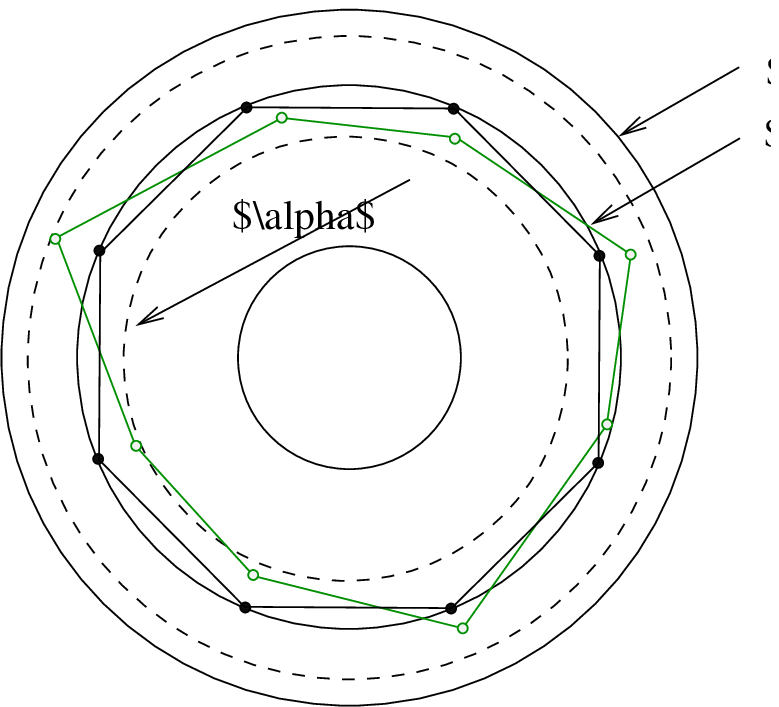
\fi\\
\caption{An inscribed polytope and a rational approximation with vertices, respectively, as filled in dots and open dots. The innermost circle is the ball that we can guarantee will remain inside the convex hull of the approximated points.}
\end{center}
\end{figure}
\begin{theorem}[Theorem 3 in~\cite{kochol04}]
\label{rounding}
Let $n,m$ be positive integers, $2n\leq m \leq c^n$, where $c > 1$ is a constant. Let $\rho(n,m)$ as the maximal radius of a ball (with center at the origin) contained in the convex hull of $m$ points chosen from the $n$-dimensional sphere of radius $1$. Then there exist constants $c_1$ and $c_2$ such that 
$$
c_1 \sqrt{\frac{\log(m/n)}{n}} \leq \rho(n,m) \leq c_2 \sqrt{\frac{\log(m/n)}{n}}.
$$
Furthermore, there exists a polynomial time algorithm in $n$ and $m$ to construct a set of vectors $V \subset \Z^n$ with $|V| \leq m$ such that the polytope with extreme points $\{ \v/||\v||_2 : \v \in V\}$ is symmetric across all axes and attains such bounds.
\end{theorem}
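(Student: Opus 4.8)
\emph{Reformulation.} My plan is to recast $\rho(n,m)$ as a covering parameter for the sphere. For unit vectors $v_1,\dots ,v_m\in S^{n-1}$, the ball $B(0,\rho)$ lies in $\conv(v_1,\dots ,v_m)$ exactly when the support function of the polytope is at least $\rho$ in every direction, i.e.\ when $\max_i\langle u,v_i\rangle\ge\rho$ for all $u\in S^{n-1}$; equivalently, when the spherical caps $C_i:=\{u\in S^{n-1}:\langle u,v_i\rangle\ge\rho\}$, each of geodesic radius $\arccos\rho$, cover $S^{n-1}$. Hence $\rho(n,m)$ is the largest $\rho$ for which $S^{n-1}$ can be covered by $m$ equal caps of geodesic radius $\arccos\rho$, and both inequalities of the theorem become estimates on the minimum number of equal caps needed to cover the sphere. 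Throughout I would use the standard asymptotics for the normalized measure of such a cap, which behaves like the Gaussian tail $\Phi(-\rho\sqrt n)$ when $\rho=O(1/\sqrt n)$ (the near-hemisphere regime) and like $\tfrac{c}{\rho\sqrt n}(1-\rho^2)^{n/2}$ once $\rho\sqrt n\to\infty$.

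\emph{Upper bound.} Since the caps $C_i$ cover $S^{n-1}$, the sum of their measures, which is $m$ times the measure $\mu$ of a single cap, is at least the minimal covering density of the sphere by equal caps. The trivial lower bound ``density $\ge 1$'' only yields $\rho\le c\sqrt{\log m/n}$, which is not good enough in the range $2n\le m\le n^{O(1)}$; instead I would invoke a Coxeter--Few--Rogers-type lower bound, namely that this covering density exceeds $1$ by a factor linear in $n$. Substituting the cap-measure estimate into $m\mu\ge cn$ and solving for $\rho$ produces $\rho(n,m)\le c_2\sqrt{\log(m/n)/n}$: the extra factor $n$ is precisely what converts $\log m$ into $\log(m/n)$.

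\emph{Lower bound and construction.} For the matching lower bound I would produce, for each admissible $m$, a set of $m$ unit vectors whose caps of geodesic radius $\arccos\!\bigl(c_1\sqrt{\log(m/n)/n}\bigr)$ cover $S^{n-1}$. Existence follows from Rogers' theorem on economical sphere coverings: $S^{n-1}$ can be covered by caps of that radius using a number of caps that exceeds $1/\mu$ by only a factor $O(n\log n)$, and one checks this number is $\le m$ for a suitable constant $c_1$ (at the low end $m=\Theta(n)$ one instead starts from the cross-polytope $\{\pm e_i\}$, whose caps already cover with margin $1/\sqrt n$, and adjoins further vectors to enlarge the margin). To obtain the algorithmic statement I would then (i) \emph{symmetrize}: cover only the part of $S^{n-1}$ in the nonnegative orthant and take all $2^n$ sign patterns of each center, giving a polytope symmetric across every coordinate hyperplane; when $m$ is large enough to absorb the resulting factor in the cap count this is harmless, while for smaller $m$ one uses instead axis-symmetric families such as the cross-polytope or the $k$-fold sign-vector polytopes $\{\,k^{-1/2}(\pm e_{i_1}\pm\dots\pm e_{i_k})\,\}$; and (ii) \emph{round}: replace each ideal (possibly irrational) center $\v$ by $\mathrm{round}(T\v)\in\Z^n$ for a polynomially large integer $T$, so that the normalized rounded vector is within $o(\rho)$ of $\v$ and the covering, hence the inscribed radius, degrades only by a constant absorbed into $c_1$. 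Rogers' covering, the symmetrization, and the rounding are all explicit, so the whole procedure runs in time polynomial in $n$ and $m$.

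\emph{Main difficulty.} The crux is the upper bound: passing from the easy $\sqrt{\log m/n}$ to the asserted $\sqrt{\log(m/n)/n}$ needs a covering-density bound that beats the volume bound by a polynomial-in-$n$ factor, and one must combine it with the non-uniform behaviour of the cap measure as $\rho$ runs from $\Theta(1/\sqrt n)$, where a cap is almost a hemisphere, up to $\Theta(1)$. The secondary difficulty is bookkeeping in the construction: making the configuration simultaneously integral, symmetric across all axes, of cardinality $\le m$, and matched to the optimal radius forces one to stitch together different explicit families across the regimes $m=\Theta(n)$, $m=n^{O(1)}$ and $m=2^{\Theta(n)}$, and to track the loss incurred by rational rounding.
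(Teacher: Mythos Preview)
First, a framing remark: the paper does not give its own proof of this theorem. It is quoted verbatim as ``Theorem~3 in~\cite{kochol04}'' and then used as a black box in the shallow-cut oracle. So there is no in-paper argument to compare against; what follows is an assessment of your sketch on its own merits.

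Your reformulation --- $B(0,\rho)\subset\conv(v_1,\dots,v_m)$ iff the caps $\{u:\langle u,v_i\rangle\ge\rho\}$ cover $S^{n-1}$ --- is correct, and your plan for the lower bound (Rogers-type economical covering, symmetrisation by sign patterns, then integer rounding of the centres) is exactly the spirit of Kochol's construction. That part is essentially fine, modulo the bookkeeping you already flag.

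The genuine gap is in your upper-bound argument. You want $m\mu(\mathrm{cap})\ge cn$ and propose to get it from a Coxeter--Few--Rogers--type density bound. But CFR is a statement about coverings of $\R^n$ by balls, or equivalently about coverings of $S^{n-1}$ by \emph{small} caps. The place where $\log(m/n)$ differs materially from $\log m$ is exactly the range $m=\Theta(n)$, where the caps have height $\rho=\Theta(1/\sqrt n)$ and hence angular radius within $O(1/\sqrt n)$ of $\pi/2$: near-hemispheres. In that regime there is no CFR theorem to invoke; the Euclidean-approximation argument underlying CFR breaks down when the caps are comparable to the whole sphere. You can get $m\mu\ge cn$ there only by combining $m\ge n+1$ (full-dimensionality) with $\mu=\Theta(1)$, but $\mu=\Theta(1)$ already presupposes $\rho=O(1/\sqrt n)$, which is what you are trying to prove --- so the argument is circular in precisely the critical range. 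The standard route to the sharp upper bound is not a density bound at all but the polytope-volume inequality (B\'ar\'any--F\"uredi, Carl--Pajor, Gluskin)
\[
\vol\bigl(\conv(v_1,\dots,v_m)\bigr)\ \le\ \Bigl(c\,\frac{\log(em/n)}{n}\Bigr)^{n/2}\vol(B_2^n)\qquad(v_i\in S^{n-1}),
\]
which, together with the trivial $\vol(\conv(v_i))\ge\rho^{\,n}\vol(B_2^n)$, yields $\rho\le c_2\sqrt{\log(m/n)/n}$ directly. So your diagnosis of the ``main difficulty'' is accurate, but the tool you reach for does not cover the regime that matters.
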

Kochol notes that choosing $m: = n^2$ points improves the $O(n^{3/2})$-rounding to $O(n^{3/2} / \sqrt{\log n})$ and still allows a polynomial time rounding, and improves upon the complexity for Lenstra's algorithm given in~\cite{schriver88}. Theorem \ref{rounding} demonstrates that using an exponential number of points is necessary to obtain an $O(n)$-rounding via the shallow cut ellipsoid method. A tighter rounding is  advantageous, even if it requires an exponential number of evaluations.  In our new algorithm we choose a single exponential number of points, $m:=n2^n$ will suffice, to obtain an $O(n)$-rounding. This improves the exponential coefficient in the final complexity.\\

We will now show that numerical approximations of Kochol's approximating spheres will still allow for an optimal rounding. We denote the sphere of radius $r$ as $S^{n-1}(r) = \{ \x \in \R^{n} \colon ||\x||_2 =r\}$. A \emph{$1$-net} of $S^{n-1}(r)$ is a set of points $N\subset S^{n-1}(r)$ such that for any point $\z \in S^{n-1}(r)$, there exists a point $\v$ such that $||\v- \z||_2 \leq 1$.
\begin{lemma}
\label{netapprox}
Let $N$ be a $1$-net of $S^{n-1}(1)$, let $0 \leq \alpha < \frac{1}{2}$ and let $0 < \epsilon < \frac{1}{2} - \alpha $. Suppose that $\tilde{N}$ is an \emph{$\epsilon$-approximation} of $N$, that is to say that for all $\v \in N$ there exists a $\tilde{\v} \in \tilde{N}$ such that
$$
||\v - \tilde{\v}||_2 \leq \epsilon,
$$
then  $\conv(\tilde{N}) \supseteq S^{n-1}(\alpha)$.
\end{lemma}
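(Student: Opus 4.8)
The plan is to bound the support function of $\conv(\tilde N)$ from below by $\alpha$ in every direction, and then invoke the standard separating-hyperplane characterization of when a convex body contains a ball.

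First I would fix an arbitrary unit vector $\mathbf u\in S^{n-1}(1)$. Since $N$ is a $1$-net of $S^{n-1}(1)$, there is a point $\v\in N$ with $\|\v-\mathbf u\|_2\le 1$. Because $\|\v\|_2=\|\mathbf u\|_2=1$, expanding the square gives $\|\v-\mathbf u\|_2^2=2-2\,\mathbf u^T\v$, so the inequality $\|\v-\mathbf u\|_2\le 1$ forces $\mathbf u^T\v\ge\tfrac12$. Next, the $\epsilon$-approximation hypothesis supplies $\tilde\v\in\tilde N$ with $\|\v-\tilde\v\|_2\le\epsilon$, and then by Cauchy--Schwarz
$$
\mathbf u^T\tilde\v=\mathbf u^T\v-\mathbf u^T(\v-\tilde\v)\ \ge\ \tfrac12-\|\v-\tilde\v\|_2\ \ge\ \tfrac12-\epsilon\ >\ \alpha,
$$
the final inequality being precisely the hypothesis $\epsilon<\tfrac12-\alpha$. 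Hence $\max_{\mathbf w\in\tilde N}\mathbf u^T\mathbf w>\alpha$; i.e., the support function of $\conv(\tilde N)$ strictly exceeds $\alpha$ in every direction.

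To finish I would argue by contradiction. In our setting the net $N$, and hence $\tilde N$, is finite, so $K:=\conv(\tilde N)$ is compact and convex. If some $\z\in S^{n-1}(\alpha)$ were not contained in $K$, strict separation of the point $\z$ from the disjoint closed convex set $K$ would yield a unit vector $\mathbf u$ with $\max_{\mathbf w\in\tilde N}\mathbf u^T\mathbf w<\mathbf u^T\z\le\|\z\|_2=\alpha$, contradicting the bound just established. Therefore $S^{n-1}(\alpha)\subseteq\conv(\tilde N)$, and since $\conv(\tilde N)$ is convex it in fact contains the whole ball $B(\alpha,\0)$.

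I do not anticipate a serious obstacle, as the estimate is elementary. The two points deserving care are the closedness of $\conv(\tilde N)$ that legitimizes the separation step --- automatic here because the net is finite, and otherwise recoverable by passing to $\overline{\conv(\tilde N)}$ and using continuity of the support function on the compact unit sphere --- and keeping track of the direction of each inequality so that the strict hypothesis $\epsilon<\tfrac12-\alpha$ produces the strict bound $h_{\conv(\tilde N)}(\mathbf u)>\alpha$; it is exactly this positive margin that guarantees the full sphere $S^{n-1}(\alpha)$, not merely the open ball of radius $\alpha$, is captured.
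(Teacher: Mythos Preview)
Your proof is correct and is essentially the same argument as the paper's: both separate a hypothetical point $\z\in S^{n-1}(\alpha)\setminus\conv(\tilde N)$ by a hyperplane with unit normal $\mathbf u$, use the $1$-net property to find $\v\in N$ with $\mathbf u^T\v\ge\tfrac12$, and then use the $\epsilon$-approximation to push $\tilde\v$ past the hyperplane. The only difference is presentational---you phrase the key estimate via the support function and inner products, while the paper phrases it geometrically in terms of a spherical cap and the distance from $\v$ to the separating hyperplane.
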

\begin{proof}
Suppose there exists a point $\z \in S^{n-1}(\alpha)$ that does not belong to $\conv(\tilde{N})$, then separating $\z$ from $\conv(\tilde{N})$ by a hyperplane $p_{\z}$ we get a cap of $S^{n-1}(1)$ which is disjoint from $\tilde{N}$ and its top $\t$ where $\t$ is perpendicular to~$p_{\z}$. Since $\t$ is in $S^{n-1}(1)$, there exists a point $\v \in N$ from the 1-net that satisfies $||\t - \v||_2 \leq 1$. See Figure \ref{geometry} for the geometry.
\begin{figure}
\label{geometry}
\begin{center}
\ifpdf
\input{geometrypdf.tex}
\else
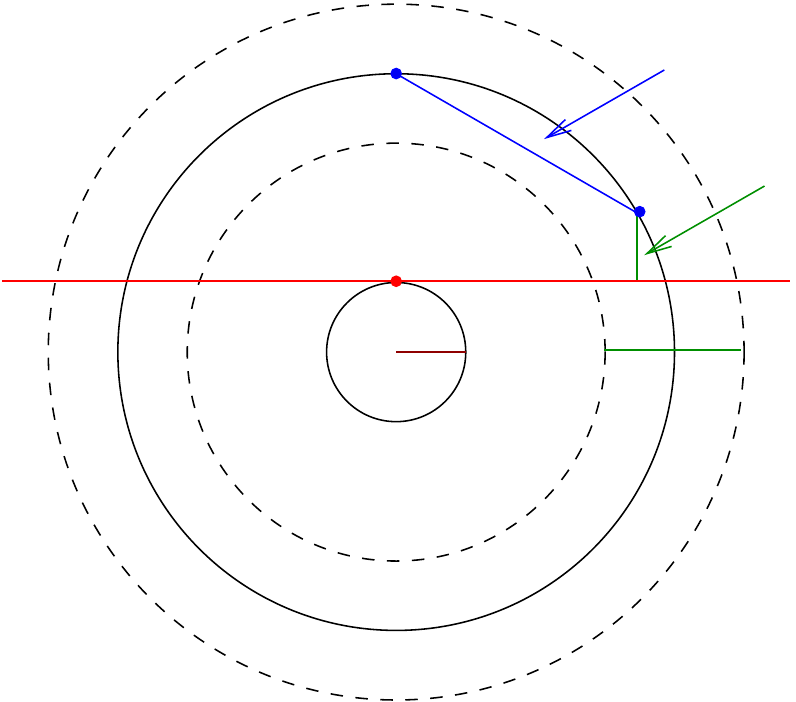
\fi
\caption{Geometry of the proof for Lemma \ref{netapprox}}
\end{center}
\end{figure}
Letting $d$ be the minimum distance between $\v$ and the hyperplane $p_{\z}$, we can see that $d \geq \frac{1}{2} - \alpha > \epsilon$, which is a contradiction since $\tilde{N}$ is an $\epsilon$-approximation of $N$.
\end{proof}

\begin{corollary}
\label{rationalapprox}
Let $N$ be the set of points given by Kochol's construction for an approximation of the unit sphere and let $\tilde{N}$ be an $\epsilon$-approximation of $N$ with $0 < \epsilon < \frac{1}{4}$. Then there exist a constant $c_1(\epsilon)$ such that the ball of radius $c_1 \sqrt{\frac{\log(m/n)}{n}}$ is contained in $\conv(\tilde{N})$.
\end{corollary}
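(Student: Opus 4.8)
The plan is to track the radius of the largest ball centered at $\0$ inscribed in the convex hull, first through Kochol's construction and then through the $\epsilon$-perturbation, the guiding principle being that displacing the generating points by $\epsilon$ costs at most $\epsilon$ in this radius. First I would invoke Theorem~\ref{rounding}: the set $N$ lies on $S^{n-1}(1)$, is symmetric across all coordinate axes, and its convex hull contains $B(\rho,\0)$ with $\rho=\rho(n,m)\geq c_1\sqrt{\log(m/n)/n}$. I would then record the support-function form of this inclusion: $B(\rho,\0)\subseteq\conv(N)$ holds if and only if, for every unit vector $\x\in S^{n-1}(1)$, there is a point $\v\in N$ with $\x^T\v\geq\rho$.

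Next I would transfer this property to $\tilde N$. Fix a unit vector $\x$, choose $\v\in N$ with $\x^T\v\geq\rho$, and choose $\tilde\v\in\tilde N$ with $||\v-\tilde\v||_2\leq\epsilon$ (available by the definition of an $\epsilon$-approximation). Then
$$
\x^T\tilde\v \;=\; \x^T\v+\x^T(\tilde\v-\v)\;\geq\;\rho-||\x||_2\,||\tilde\v-\v||_2\;\geq\;\rho-\epsilon .
$$
As $\x$ ranges over $S^{n-1}(1)$ this says that the support function of $\conv(\tilde N)$ is at least $\rho-\epsilon$ in every direction, so $B(\rho-\epsilon,\0)\subseteq\conv(\tilde N)$; this is the same perturbation effect that underlies Lemma~\ref{netapprox}, phrased here directly via support functions rather than caps. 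It then remains to observe that $\rho\geq c_1\sqrt{\log(m/n)/n}$, so once $\epsilon$ is below $\rho$ one has $\rho-\epsilon>0$; and for the point count $m=n2^n$ actually used later the quantity $\sqrt{\log(m/n)/n}$ is a fixed positive constant, so the hypothesis $\epsilon<\tfrac14$ --- which is calibrated precisely to the constant lower bound on $\rho(n,n2^n)$ from Theorem~\ref{rounding} --- secures $\epsilon<\rho$. One may then take $c_1(\epsilon)$ to be any constant with $c_1(\epsilon)\sqrt{\log(m/n)/n}\leq\rho(n,m)-\epsilon$; for example $c_1(\epsilon)=c_1/2$ works as soon as $\epsilon\leq\tfrac12 c_1\sqrt{\log(m/n)/n}$.

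The only delicate point is the perturbation step: a general polytope can lose its entire inscribed ball when its vertices move by $\epsilon$, so the argument has to use both that the inscribed ball is centered at $\0$ and that an $\epsilon$-approximation provides, for \emph{every} direction $\x$, a genuine net point of $N$ --- hence a point of $\tilde N$ --- with inner product at least $\rho-\epsilon$ against $\x$. I would also stress that only this one-sided covering (each point of $N$ being close to a point of $\tilde N$) is used; the reverse inclusion never enters, which is precisely why the asymmetric notion of $\epsilon$-approximation is sufficient for a lower bound on the inscribed radius.
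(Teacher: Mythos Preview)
Your support-function perturbation argument is clean and correct as far as it goes: from $B(\rho,\0)\subseteq\conv(N)$ and an $\epsilon$-approximation $\tilde N$ one does obtain $B(\rho-\epsilon,\0)\subseteq\conv(\tilde N)$. The gap is in the final step, converting $\rho-\epsilon$ into $c_1(\epsilon)\sqrt{\log(m/n)/n}$ with $c_1(\epsilon)$ depending \emph{only} on $\epsilon$. The corollary is stated (and later invoked in Theorem~\ref{shallow}) for the full range $2n\leq m\leq \hat c^{\,n}$, and in that range $\rho(n,m)$ can be as small as $O(1/\sqrt{n})$; for any fixed $\epsilon>0$ one then has $\rho-\epsilon<0$ for large $n$, and your bound is vacuous. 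Your attempt to rescue this by specializing to $m=n2^n$ both changes the statement being proved and still rests on the unverified claim that Kochol's constant satisfies $c_1\sqrt{\log 2}>\tfrac14$. In fact the threshold $\tfrac14$ is not ``calibrated'' to $\rho(n,n2^n)$ at all: it comes from taking $\alpha=\tfrac14$ in Lemma~\ref{netapprox}, which requires $\epsilon<\tfrac12-\alpha$.

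The paper's route is genuinely different and avoids this problem by not treating Theorem~\ref{rounding} as a black box. Kochol's proof of the lower bound proceeds by exhibiting, inside the construction, a $1$-net of an appropriately scaled sphere; Lemma~\ref{netapprox} says precisely that this $1$-net step survives an $\epsilon$-perturbation for any absolute $\epsilon<\tfrac14$, at the cost of a constant factor in the inscribed radius. Rerunning Kochol's argument with Lemma~\ref{netapprox} substituted at that step yields the $c_1(\epsilon)\sqrt{\log(m/n)/n}$ bound uniformly in $m$ and $n$. The key point your argument misses is that the $1$-net property is strictly stronger than the inscribed-ball property, and it is this stronger property that makes the admissible perturbation an absolute constant rather than one proportional to $\rho(n,m)$.
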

\begin{proof}
Using Lemma \ref{netapprox} with $\alpha = \tfrac{1}{4}$, the proof is very similar to Theorem 1 in~\cite{kochol94}.
\end{proof}
This is now the template for our separation oracle.  We will choose $m:=n2^n$ test points according to an approximation of Kochol's set of points.  If all the test points are feasible, we obtain an $O(n)$-rounding.  Otherwise, we find an infeasible test point and generate a shallow cut.  The specific algorithm for finding a shallow cut for quasiconvex polynomials will be presented in section 5.
\section{Integer Feasibility and Subcases}
We assume now to have an ellipsoid rounding 
$$E(\tfrac{1}{\beta^2}A,\a) \subseteq Y \subseteq E(A,\a).$$
 The next step in Lenstra's algorithm is to determine if the inner ellipsoid contains an integer point. A simple, yet powerful, way to do that is \emph{Khinchin's Flatness Theorem}, which roughly states that if the minimum width of a convex body is greater than some constant $\omega(n)$, then the convex body contains an integer point. If the minimum width is less than $\omega(n)$, then we can branch into integer hyperplanes perpendicular to the minimum width direction. Since we must branch on the larger ellipsoid, we will have fewer than $\beta (n)\omega(n) $ subcases to branch into. We will first review lattice theory and flatness directions, and present theorems for reducing the complexity of Lenstra's algorithm.

\begin{figure}
\begin{center}
\ifpdf
\input{LenstraAlgWidthPlanespdf.tex}
\else
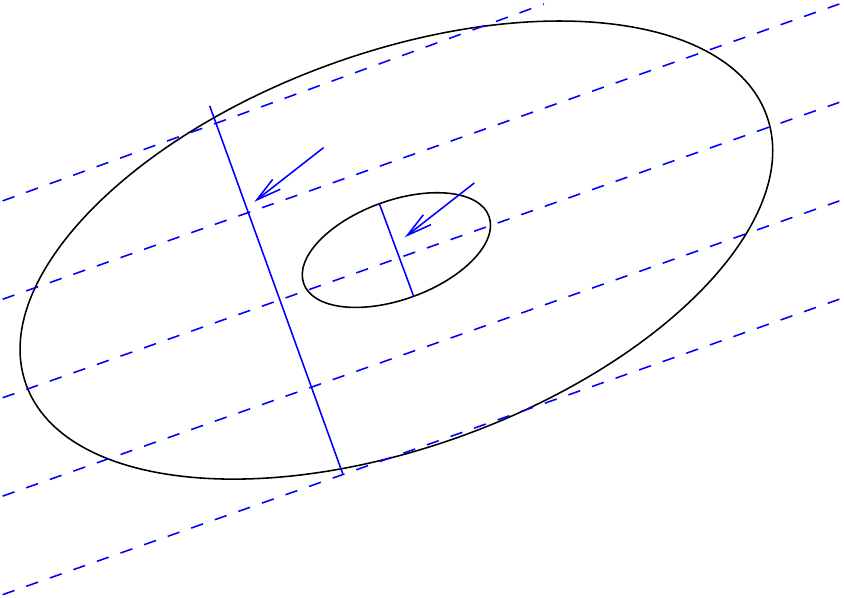
\fi\\
\end{center}
\caption{If the lattice width of the inner ellipsoid is greater than $\omega(n)$, then we know it contains a lattice point.  Otherwise, we are in the case displayed here, with roughly fewer than $\beta(n)\omega(n)$ subcases to project into.  We later use the loose bound of $2 \beta(n) \omega(n) + 3$ for the number of subcases, as shown later in this section.}
\end{figure}

\subsection{Lattices}
Given $m$ linearly independent vectors $\b_1, \b_2, \dots, \b_m \in \R^n$, the \emph{lattice} $\Lambda$ generated by $B = [\b_1, \dots, \b_m]$ is the linear transformation of $\Z^n$
$$
\Lambda = \mathcal{L}(B) = \left\{B\z : \z \in \Z^n \right\}.
$$
\noindent The set of vectors $\b_1, \dots, \b_m$, or similarly $B $, is called a \emph{basis} for the lattice $\Lambda$.
\\
Lattices naturally arise when looking for integer points in ellipsoids, since an ellipsoid is an affine transformation of $B(1,\0)$.  We will need the following related properties of a lattice.
The \emph{dual lattice} $\Lambda^*$ is given by
$$
\Lambda^* = \{ \v \in \text{span}(B) : \v^TB\in \Z^m \ \}.
$$
In particular, one can show that when $B$ is full rank, $\Lambda^* = \mathcal{L}((B^{-1})^T)$. We will use a transference bound later, which is an inequality relating the properties of a lattice and its dual.

The \textit{covering radius} $\mu(\Lambda)$ is the smallest number
$\alpha>0$ such that the set of closed balls of radius $\alpha$ centered at each lattice point completely covers all of $\R^n$. 

The shortest vector problem (SVP) is a well-studied and important problem in lattice theory with many applications.  The goal  is to find a non-zero lattice vector $\x \in \Lambda\setminus\{\0\}$ 
with minimal length $\lambda(\Lambda)$. For our purposes, we will find the shortest vector with respect to the Euclidean norm; therefore,
$$
\lambda(\Lambda) = \min_{\v \in \Lambda\setminus \{\0\}} ||\v||_2 = \min \left\{||\v||_2 : \v = B\z ,\ \z \in \Z^n\setminus\{\0\}\right\}.
$$ 

Similarly, the closest vector problem (CVP) is to find the closest vector in the lattice to a given point $\t$.  Again, we will use the Euclidean norm for this problem.  We will define CVP$(\Lambda,\t)$ as 
$$
\text{CVP}(\Lambda, \t) = \arg\min_{\v \in \Lambda} ||\v - \t||_2. 
$$
For further review on lattices, see, for instance,~\cite{bert} or \cite{50years}.

\subsection{Lattice Widths and the Shortest Vector Problem}
Kannan first observed that SVP could be used to minimize the number of branching directions in Lenstra's algorithm~\cite{kannan83}. We follow Eisenbrand in presenting this in the context of flatness directions~\cite{50years}.
Let $K\subset \R^n$ be a non-empty closed subset of $\R^n$ and let $\d \in \R^n$ be a vector. The \emph{width of $Y$ along $\d$} is the number
$$
w_{\d}(K) = \max\{\d^T \x : \x \in K\} - \min\{\d^T \x :\x \in K\}.
$$
The \textit{lattice width} of $Y$ is defined as 
$$
w(K) = \min_{\d \in \Z^n\setminus \{\0\}} w_{\d}(K),
$$
and any $\d$ that minimizes $w_{\d}(K)$ is called a \textit{flatness direction} of $K$.
\begin{theorem}[Khinchin's flatness theorem~\cite{khin48}]
There exists a function $\omega(n)$, depending only on the dimension, such that if  $K\subset \R^n$ is convex and $w(K) > \omega(n)$, then $K$ contains an integer point.
\label{flatness}
\end{theorem}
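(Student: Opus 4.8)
The plan is to prove the contrapositive: if $K\subset\R^n$ is convex and $K\cap\Z^n=\emptyset$, then $w(K)$ is bounded above by an explicit function $\omega(n)$. I may assume $K$ is compact (the unbounded case is classical and, in any event, only bounded convex sets --- indeed ellipsoids --- arise in Lenstra's algorithm). First I would reduce to an ellipsoid: by John's theorem there is an ellipsoid $E=E(A,\a)$ with $E\subseteq K$ and with $K$ contained in the $n$-fold dilate of $E$ about its center $\a$. Since the lattice width is monotone under inclusion and homogeneous under dilation, $w(K)\le n\,w(E)$; and $E\subseteq K$ is itself lattice-free. Hence it suffices to bound the lattice width of a lattice-free ellipsoid, losing only a factor $n$.

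Next I would translate the ellipsoidal statement into the language of lattices and their duals. Write $A=BB^T$ and $\tau(\x)=B^{-1}(\x-\a)$; then $\tau$ carries $E$ onto the unit ball $B(1,\0)$ and $\Z^n$ onto the shifted lattice $\Lambda+\t$, where $\Lambda=\L(B^{-1})$ and $\t=-B^{-1}\a$. Thus $E\cap\Z^n=\emptyset$ is equivalent to $(\Lambda+\t)\cap B(1,\0)=\emptyset$, i.e. the point $-\t$ has distance greater than $1$ from $\Lambda$; in particular the covering radius satisfies $\mu(\Lambda)>1$. On the other hand, for $\d\in\Z^n$ the width of $E$ along $\d$ equals $2\sqrt{\d^T A\d}=2\|B^T\d\|_2$, and as $\d$ runs over $\Z^n\setminus\{\0\}$ the vectors $B^T\d$ run over the nonzero elements of the dual lattice $\Lambda^*=\L(B^T)$. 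Therefore $w(E)=2\,\lambda(\Lambda^*)$, twice the length of a shortest nonzero dual vector.

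These two facts are linked by a transference inequality $\mu(\Lambda)\,\lambda(\Lambda^*)\le C(n)$. Combined with $\mu(\Lambda)>1$ it gives $\lambda(\Lambda^*)<C(n)$, hence $w(E)<2C(n)$ and so $w(K)<2n\,C(n)=:\omega(n)$; taking contrapositives proves the theorem. With Banaszczyk's sharp bound $C(n)=n/2$ one may take $\omega(n)=n^2$ in general; and when $K$ is already an ellipsoid the factor-$n$ loss from John's theorem disappears, leaving $\omega(n)=O(n)$ --- the bound quoted in the introduction, and the only case Lenstra's algorithm needs after the rounding step of Section 2. (The weaker flatness constant $2^{O(n^2)}$ used in the classical analyses corresponds to using a crude transference bound, e.g. via Minkowski's second theorem and basis reduction, instead of Banaszczyk's.)

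The main obstacle is the transference inequality $\mu(\Lambda)\lambda(\Lambda^*)\le C(n)$. The easy direction $\mu(\Lambda)\lambda(\Lambda^*)\ge\tfrac12$ is a short covering/volume argument, but the upper bound --- and especially the sharp linear bound $C(n)=O(n)$, which is exactly what yields the improved exponential constant in our complexity --- is the deep input (Banaszczyk's theorem, proved via Gaussian measures on lattices); I would cite it rather than reprove it. The remaining ingredients --- the affine normalization, the identity $w_{\d}(E)=2\sqrt{\d^T A\d}$, and the identification $B^T\Z^n=\Lambda^*$ --- are routine.
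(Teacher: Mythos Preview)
Your argument is correct. Note, however, that the paper does not actually prove Khinchin's flatness theorem --- it is stated with a citation only. What the paper does derive (from cited results) is the special case for ellipsoids: Banaszczyk's transference bound $\lambda(\Lambda)\mu(\Lambda^*)\le n/2$ is quoted, and from it one concludes that a lattice-free ellipsoid has lattice width at most $n$. That is precisely the ellipsoidal core of your argument, carried out with the same identification $w(E)=2\lambda(\Lambda^*)$ and the same appeal to Banaszczyk. Your additional step --- reducing a general convex body to an inscribed John ellipsoid at the cost of a factor $n$, giving $\omega(n)=n^2$ --- is correct and is the standard modern route to the general flatness theorem; the paper simply omits it because only the ellipsoid case is ever used in the algorithm after the rounding step. (Two minor points you gloss over but which are easily handled: one should first reduce to $K$ having nonempty interior, since otherwise $w(K)=0$ trivially; and the unbounded case follows by exhausting $K$ with bounded truncations $K\cap B(R,\0)$ and passing to the limit.)
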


The  best known bound for $\omega(n)$ is $O(n^{3/2})$ and it is conjectured that $\omega(n) = \Theta(n)$ \cite{bana99}. We will see in the next subsection that, for the specific case of ellipsoids, we can obtain this bound.\\

Flatness directions are invariant under dilations. This is easily shown for the case of ellipsoids. 
\begin{lemma}
\label{betaflat}
Let $\d \in \Z^n$ be a flatness direction for $E(A,\a)$. 
Then for any $\beta \in \R$, $\d$ is a flatness direction for $E(\frac{1}{\beta^2} A,\a)$ with
$$
w(E(\tfrac{1}{\beta^2} A, \a))=\tfrac{1}{\beta} w(E(A,\a)).
$$
\end{lemma}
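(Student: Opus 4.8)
The plan is to reduce everything to an explicit formula for the width of an ellipsoid along a fixed direction. First I would compute, for an arbitrary $\d \in \R^n$, the support function of $E(A,\a)$: writing $\y = \x - \a$, maximizing $\d^T\x = \d^T\a + \d^T\y$ subject to $\y^T A^{-1}\y \le 1$ is a standard Cauchy--Schwarz (or Lagrange multiplier) computation on the quadratic form, and it yields
$$
\max\{\d^T\x : \x \in E(A,\a)\} = \d^T\a + \sqrt{\d^T A \d}, \qquad \min\{\d^T\x : \x \in E(A,\a)\} = \d^T\a - \sqrt{\d^T A \d}.
$$
Subtracting gives the key identity $w_{\d}(E(A,\a)) = 2\sqrt{\d^T A\d}$, valid for every $\d$.

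Next I would apply this identity with $A$ replaced by $\tfrac{1}{\beta^2}A$. Since $\beta$ is a rounding radius we may take $\beta > 0$, so $\sqrt{\d^T(\tfrac{1}{\beta^2}A)\d} = \tfrac{1}{\beta}\sqrt{\d^T A\d}$, and therefore
$$
w_{\d}\bigl(E(\tfrac{1}{\beta^2}A,\a)\bigr) = \tfrac{1}{\beta}\, w_{\d}(E(A,\a)) \quad \text{for all } \d \in \R^n.
$$

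Finally, because the scaling factor $\tfrac{1}{\beta}$ does not depend on $\d$, minimizing over $\d \in \Z^n\setminus\{\0\}$ on both sides preserves the set of minimizers: any $\d$ achieving the minimum of $w_{\d}(E(A,\a))$ also achieves the minimum of $w_{\d}(E(\tfrac{1}{\beta^2}A,\a))$, hence $\d$ is again a flatness direction, and taking the minimum of the displayed identity over $\d$ gives $w(E(\tfrac{1}{\beta^2}A,\a)) = \tfrac{1}{\beta} w(E(A,\a))$. The only step requiring any care is the support-function computation; everything after it is a one-line observation, so I do not expect a genuine obstacle — just the need to record the width formula cleanly and to note that $\beta>0$ so that $\sqrt{1/\beta^2}=1/\beta$.
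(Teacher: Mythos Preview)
Your proposal is correct and matches the paper's own treatment: the paper states this lemma without proof (calling it ``easily shown'') and in the immediately following remark derives the same width formula $w_{\d}(E(A,\a)) = 2\|(A^{1/2})^T\d\|_2 = 2\sqrt{\d^T A\d}$ via the change of variables $\x \mapsto A^{-1/2}\x$, from which the scaling by $1/\beta$ and the invariance of the minimizer are immediate. Your observation that one should take $\beta>0$ (so that $\sqrt{1/\beta^2}=1/\beta$) is a fair caveat; the paper's ``any $\beta\in\R$'' is slightly loose, but in context $\beta$ is always a positive rounding radius.
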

\begin{remark}
\label{flatdir} 
For an ellipsoid, a flatness direction can be computed by solving the shortest vector problem in the lattice $\Lambda = \mathcal{L}((A^{1/2})^T)$. To see this, consider the width along a direction $\d$ of the ellipsoid $E(A,\0)$, 
\begin{align*}
w_\d(E(A,\a))&= \max\{\d^T \x : \x \in E(A,\0)\} - \min\{\d^T \x :\x \in E(A,\0)\}\\
&= \max_{\x_1, \x_2 \in E(A,\0)} \d^T(\x_1-\x_2).
\end{align*}
We have $\d^T (\x_1 - \x_2) = \d^T A^{1/2}(A^{-1/2}\x_1 - A^{-1/2}\x_2)$ where $A^{-1/2}\x_1 $ and $A^{-1/2}\x_2$ are contained in the unit ball if and only if $\x_1, \x_2 \in E(A,\0)$. Thus properly choosing $\x_1$ and $\x_2$ on the boundary of $E(A,\0)$, we see that 
$$w_\d(E(A,\0)) = 2 ||\d^T A^{1/2}||_2 = 2 ||(A^{1/2})^T\d||_2.$$
Finding the minimum lattice width is reduced to solving a SVP over the lattice $\Lambda$.
After this transformation, integer points from $E(\frac{1}{\beta^2} A, \a) \cap \Z^n$ now live in $B(1, A^{-1/2} \a) \cap \Lambda^*$ where $\Lambda^* = \mathcal{L}(A^{-1/2})$.\\
There are two computations arising here:\\
1. We find a shortest vector in $\Lambda$ to determine a flatness direction.  If $2\lambda(\Lambda) \leq \omega(n)$, then we will project into a minimal number of subcases.  \\
2. If $2\lambda(\Lambda) > \omega(n)$ then we have confirmed that $E(\frac{1}{\beta^2}A, \a)$ contains an integer point.   To recover this lattice point, we solve $\z = \mathrm{CVP}(\Lambda^*, A^{-1/2} \a)$ and then set $\x^* = A^{1/2}\z$. Since $\x^*$ is then a closest integer point to $\a$ with respect to the ellipsoidal norm and we know that $E(\frac{1}{\beta^2}A,\a)$ contains an integer point, we have $\x^* \in E(\frac{1}{\beta^2}A,\a) \cap \Z^n$.          
\end{remark}
\begin{figure}
\label{LenstraAlgDetection}
\begin{center}
\ifpdf
\input{LenstraAlgDetectionpdf.tex}
\else
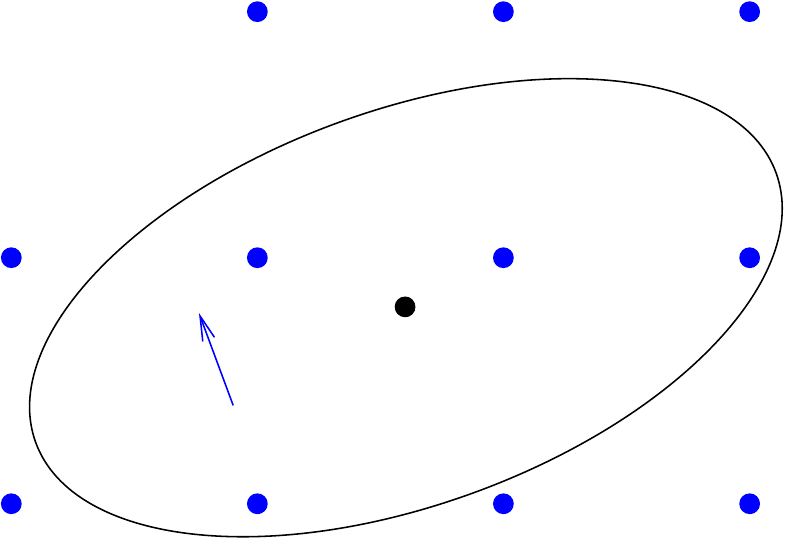
\fi
\caption{We use SVP on $\Lambda = \mathcal{L}((A^{1/2})^T)$ to solve for a flatness direction.  If this shows that the lattice width $w(E(\frac{1}{\beta^2} A, \a)) > \omega(n)$, then it contains a lattice point.  We then recover a feasible integer point by solving CVP$(\Lambda^* = \mathcal{L}(A^{-1/2}), A^{-1/2}\a)$, where the target point is the center transformed by the lattice basis.  The solution transformed back to the original space will be the closest lattice point to $\a$ with respect to the ellipsoidal norm, and therefore is contained in the ellipsoid.}
\end{center}
\end{figure}
\subsection{Results from the Geometry of Numbers}
The geometry of numbers produces a small bound on the lattice width of an ellipsoid not containing an integer point.  Using properties of LLL reduced bases, Lenstra originally observed that this value did not exceed $2^{O(n^2)}$~\cite{lenstra83}. By considering the specific case of ellipsoids, we can achieve an $O(n)$ bound. For an arbitrary lattice, the product of the length of a shortest vector in a lattice and the covering radius of the dual lattice is bounded by a constant $f(n)$ dependent only on dimension. Using the Fourier transform applied to a probability measure on a lattice, Banaszczyk showed that this function is bounded by a linear factor in the dimension $n$.
\begin{theorem}[Theorem 2.2 in~\cite{bana93}] 
\label{cover}
Let $\Lambda \subset \R^n$ be a lattice with $n\geq 1$. 
Then $\lambda(\Lambda) \mu(\Lambda^*) \leq f(n) \leq \frac{1}{2} n$.
\end{theorem}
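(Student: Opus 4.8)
The plan is to follow Banaszczyk's Gaussian-measure method, since the classical Minkowski-type arguments only yield an exponential factor in this transference inequality. Write $\rho_s(\x)=e^{-\pi||\x||^2/s^2}$ and $\rho_s(S)=\sum_{\x\in S}\rho_s(\x)$ for a countable set $S$, and put $\rho=\rho_1$. First I would record the Poisson-summation identity: since $\rho_s$ is, up to the factor $s^n$, the Fourier transform of $\rho_{1/s}$, for every full-rank lattice $L\subset\R^n$, every $\x\in\R^n$,
\[
\rho_s(L+\x)=\frac{s^n}{\det L}\sum_{\v\in L^*}\rho_{1/s}(\v)\,e^{2\pi i\langle\v,\x\rangle}.
\]
Specializing to $L=\Lambda^*$ (using $(\Lambda^*)^*=\Lambda$ and $\det\Lambda^*=1/\det\Lambda$), the prefactor $s^n\det\Lambda$ cancels in a ratio, and isolating the $\v=\0$ term and using $\cos\ge-1$ I obtain the lower bound
\[
\frac{\rho_s(\Lambda^*+\t)}{\rho_s(\Lambda^*)}=\frac{\sum_{\v\in\Lambda}\rho_{1/s}(\v)\cos(2\pi\langle\v,\t\rangle)}{\rho_{1/s}(\Lambda)}\ \ge\ \frac{2-\rho_{1/s}(\Lambda)}{\rho_{1/s}(\Lambda)},
\]
valid for every $\t\in\R^n$.

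The second ingredient, and the technical heart, is Banaszczyk's Gaussian tail estimate: there is a universal function $g$ with $g(c)\to0$ as $c\to\infty$ such that for every full-rank lattice $L\subset\R^n$, every $\x\in\R^n$ and every $s>0$,
\[
\rho_s\bigl((L+\x)\setminus B(c\,s\sqrt n,\0)\bigr)\ \le\ 2\,g(c)^n\,\rho_s(L).
\]
I would take this as given; it is itself proved by a further application of Poisson summation combined with an imaginary shift of the Gaussian, and this is where essentially all of the real analytic work lies.

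Then I would assemble the estimate as follows. Fix $\Lambda$, put $\lambda=\lambda(\Lambda)$, $\mu=\mu(\Lambda^*)$, choose a deep hole $\t$ with $\mathrm{dist}(\t,\Lambda^*)=\mu$, and fix a constant $c$ large enough that $g(c)<\tfrac16$. Suppose for contradiction that $\lambda\mu\ge c^2 n$; then the interval $[\,c\sqrt n/\lambda,\ \mu/(c\sqrt n)\,]$ is nonempty, so pick $s$ inside it. Since $s\le\mu/(c\sqrt n)$, every point of $\Lambda^*+\t$ has norm $\ge\mu\ge c\,s\sqrt n$, so the \emph{entire} mass of $\Lambda^*+\t$ lies in the tail and the tail estimate gives $\rho_s(\Lambda^*+\t)\le 2g(c)^n\rho_s(\Lambda^*)$. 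Since $s\ge c\sqrt n/\lambda$, applying the tail estimate to $L=\Lambda$, $\x=\0$, width parameter $1/s$, and using that $\Lambda\setminus\{\0\}$ avoids the ball of radius $\lambda\ge c\sqrt n/s$, a one-line rearrangement yields $\rho_{1/s}(\Lambda\setminus\{\0\})\le\tfrac12$, hence $\rho_{1/s}(\Lambda)\le\tfrac32$; feeding this into the ratio bound from the first step gives $\rho_s(\Lambda^*+\t)/\rho_s(\Lambda^*)\ge\tfrac13$. Combining the two, $\tfrac13\le 2g(c)^n\le 2g(c)<\tfrac13$, a contradiction; therefore $\lambda(\Lambda)\mu(\Lambda^*)<c^2 n$.

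This already gives a linear bound $\lambda(\Lambda)\mu(\Lambda^*)=O(n)$. To reach the sharp constant $\tfrac12$ (which is tight at $n=1$, where $\Lambda=a\Z$ gives $\lambda(\Lambda)\mu(\Lambda^*)=\tfrac12$), I would replace the crude numerical constants above by Banaszczyk's optimized form of the tail estimate and track the exponential rates more carefully, which is precisely the content of \cite[Theorem~2.2]{bana93}. The main obstacle is thus not the packaging but (a) establishing the Gaussian tail estimate itself and (b) the optimization that turns the $O(1)$ constant into $\tfrac12$; the Poisson identity and the bookkeeping above are routine by comparison.
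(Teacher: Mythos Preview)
The paper does not prove this theorem at all: it is quoted verbatim as \cite[Theorem~2.2]{bana93} and used as a black box, with only the one-sentence remark that Banaszczyk's argument proceeds via ``the Fourier transform applied to a probability measure on a lattice.'' Your proposal is therefore not being compared against a proof in this paper but against Banaszczyk's original, and as such it is a faithful outline of that argument --- Poisson summation, the Gaussian tail bound, and the contradiction via a well-chosen width parameter $s$ --- with the honest caveat that the sharp constant $\tfrac12$ requires the optimized form of the tail estimate rather than the crude $g(c)<\tfrac16$ you use for the $O(n)$ version. For the purposes of \emph{this} paper nothing more than the citation is expected, so your sketch already goes well beyond what the paper itself supplies.
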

If we assume that a specific ellipsoid does not contain a lattice point, then the covering radius of the associated lattice is greater than one. Since the lattice width of an ellipsoid is simply twice the length of a shortest vector, we obtain the following inequality for ellipsoids.
\begin{theorem}[Theorem 14.26 in~\cite{50years}]
\label{width}
If $E(A,\a) \subset \R^n$ is an ellipsoid that does not contain an integer point, then $w(E(A,\a)) \leq 2 f(n)$.
\end{theorem}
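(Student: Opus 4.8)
The plan is to reduce the statement to Banaszczyk's transference bound (Theorem~\ref{cover}) via the dictionary between widths of an ellipsoid and invariants of an associated lattice set up in Remark~\ref{flatdir}. First I would note that the width $w_{\d}(E(A,\a))$ is translation invariant, hence independent of the center $\a$; so by Remark~\ref{flatdir},
$$
w(E(A,\a)) \;=\; \min_{\d\in\Z^n\setminus\{\0\}} 2\,\bigl\|(A^{1/2})^T\d\bigr\|_2 \;=\; 2\,\lambda(\Lambda), \qquad \Lambda := \mathcal{L}\bigl((A^{1/2})^T\bigr),
$$
and that the dual lattice is $\Lambda^* = \mathcal{L}(A^{-1/2})$ (since $A$ is symmetric positive definite, $(A^{1/2})^T = A^{1/2}$, and the duality formula $\Lambda^* = \mathcal{L}((B^{-1})^T)$ applies with $B = A^{1/2}$).

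Next I would translate the hypothesis into a statement about $\Lambda^*$. Since $\x\in E(A,\a)$ iff $\|A^{-1/2}(\x-\a)\|_2\le 1$, and $\x\in\Z^n$ iff $A^{-1/2}\x\in\mathcal{L}(A^{-1/2})=\Lambda^*$, the ellipsoid $E(A,\a)$ contains an integer point exactly when the closed unit ball centered at $\t := A^{-1/2}\a$ contains a point of $\Lambda^*$. By hypothesis it does not, so $\operatorname{dist}(\t,\Lambda^*) > 1$, and therefore the covering radius satisfies $\mu(\Lambda^*) \ge \operatorname{dist}(\t,\Lambda^*) > 1$. Combining this with Theorem~\ref{cover}, from $\lambda(\Lambda)\,\mu(\Lambda^*)\le f(n)$ and $\mu(\Lambda^*) > 1$ I get $\lambda(\Lambda) < f(n)$, whence $w(E(A,\a)) = 2\lambda(\Lambda) < 2f(n)$, a (slight) strengthening of the claimed inequality.

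Most of this is bookkeeping once Remark~\ref{flatdir} has been invoked, so I do not expect a serious obstacle. The one point that demands care is the passage through the covering radius: I must observe that a single lattice-free closed ball of radius $1$ already forces $\mu(\Lambda^*) > 1$ (not merely $\mu(\Lambda^*)\ge 1$), using the paper's definition of $\mu$ as the smallest radius of closed balls centered at lattice points that cover $\R^n$. I would also double-check the identification $\Lambda^* = \mathcal{L}(A^{-1/2})$ and the fact that $\lambda(\Lambda)$ equals exactly half the lattice width, both of which rest on positive definiteness of $A$.
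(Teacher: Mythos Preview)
Your proposal is correct and follows essentially the same approach the paper sketches in the sentence preceding the theorem: translate the lattice-free ellipsoid into the statement $\mu(\Lambda^*)>1$, use that the lattice width equals $2\lambda(\Lambda)$, and apply Banaszczyk's transference bound (Theorem~\ref{cover}). Your write-up is simply a careful unpacking of that sketch, with the added observation that one actually obtains the strict inequality $w(E(A,\a))<2f(n)$.
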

A  convenient bound follows directly from Theorems \ref{cover} and \ref{width}.
\begin{corollary} 
\label{flatness}
Let $E(A,\a) \subset \R^n$ be an ellipsoid not containing an integer point, then $w(E(A,\a)) \leq n$. 
\end{corollary}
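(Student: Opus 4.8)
The plan is to obtain the bound by simply chaining the two preceding theorems, which already contain all the substance. First I would invoke Theorem~\ref{width}: since $E(A,\a)$ is an ellipsoid containing no integer point, we immediately get
$$
w(E(A,\a)) \leq 2 f(n),
$$
where $f(n)$ is the universal function from Theorem~\ref{cover} governing the product $\lambda(\Lambda)\mu(\Lambda^*)$ over lattices $\Lambda \subset \R^n$.

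Next I would use the explicit linear estimate $f(n) \leq \tfrac{1}{2} n$ supplied by Theorem~\ref{cover} (Banaszczyk's transference bound). Substituting this into the previous inequality yields
$$
w(E(A,\a)) \leq 2 f(n) \leq 2 \cdot \tfrac{1}{2} n = n,
$$
which is exactly the claimed bound. No further case analysis or lattice construction is needed, since the passage from a general convex body to the ellipsoidal setting — namely that the lattice width of an ellipsoid equals twice the length of a shortest vector of the associated lattice $\Lambda = \mathcal{L}((A^{1/2})^T)$, and that failure to contain an integer point forces $\mu(\Lambda^*) > 1$ — has already been carried out in the derivation of Theorem~\ref{width}.

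I do not expect any genuine obstacle here: the corollary is purely a matter of composing Theorems~\ref{cover} and~\ref{width} and discarding the intermediate quantity $f(n)$. The only point worth stating carefully is that the hypothesis ``$E(A,\a)$ does not contain an integer point'' is used solely to license the application of Theorem~\ref{width}; once that theorem is in hand, the rest is the arithmetic $2 \cdot \tfrac{1}{2} n = n$. If one wanted the statement to be self-contained one could instead spell out the reduction to SVP via Remark~\ref{flatdir} and apply Theorem~\ref{cover} directly to $\Lambda = \mathcal{L}((A^{1/2})^T)$, but invoking Theorem~\ref{width} as a black box is the cleanest route.
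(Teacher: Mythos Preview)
Your proposal is correct and matches the paper's approach exactly: the paper simply states that the corollary follows directly from Theorems~\ref{cover} and~\ref{width}, and your argument $w(E(A,\a)) \leq 2f(n) \leq 2 \cdot \tfrac{1}{2} n = n$ is precisely that composition.
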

Hence, if $E(\frac{1}{\beta^2} A,\a)$ does not contain an integer point, then $w(E(A,\a)) \leq n \beta$.

\subsection{Complexity of the Shortest Vector Problem and the Closest Vector Problem}
The shortest vector problem (SVP) has been shown to be NP-hard, even to approximate within a constant factor~\cite{miccisvp01}. Until recently, the best known deterministic solution to SVP was by Kannan, with time-complexity $n!= 2^{O(n \log n)}$~\cite{kannaninf}. The well-known Ajtai, Kumar, and Sivakumar~\cite{ajtai01} sieving method is a probabilistic method that solves SVP with very high probability and achieved the first single exponential time-complexity, which was shown by~\cite{nguyen} to be $2^{5.9n}$. Micciancio and Voulgaris improved this type of method to achieve a run time of $2^{3.199 n}$~\cite{micci092}. 

The closest vector problem (CVP) has also been shown to be NP-hard, even to approximate it within a polynomial factor~\cite{dinur03}.  Kannan presented an algorithm to solve CVP in $n!$ time~\cite{kannaninf}, and although there have been some improvements~\cite{hanrot07, helfrich85}, none have been able to achieve a single exponential time-complexity.

Micciancio and Voulgaris discovered the first deterministic single exponential algorithms for CVP and SVP~\cite{micciancio10}.  This is an exciting and impressive result. 
\begin{theorem}[Corollarys 4.3 and 4.4 in \cite{micciancio10}]
\label{SVP}
There are deterministic algorithms to solve SVP and CVP, with the Euclidean norm, that both have time-complexity $n^{O(1)}2^{2n}$.
\end{theorem}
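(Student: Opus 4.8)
Since Theorem~\ref{SVP} is quoted verbatim from Micciancio and Voulgaris, a proof proposal amounts to recalling the structure of their argument; the engine is a deterministic algorithm for CVP, from which SVP follows by a reduction. The central object is the \emph{Voronoi cell} of the lattice $\Lambda$, namely $V(\Lambda) = \{\x \in \R^n : \|\x\|_2 \le \|\x - \v\|_2 \text{ for all } \v \in \Lambda\}$, the set of points at least as close to $\0$ as to any other lattice point. This is a centrally symmetric polytope whose facets come from the \emph{Voronoi relevant vectors}, and the first step is the classical observation (due to Voronoi) that $\v \in \Lambda$ induces a facet only if $\pm\v$ are the unique shortest vectors in the coset $\v + 2\Lambda$; hence there are at most $2(2^n-1)$ relevant vectors, and a full facet description of $V(\Lambda)$ fits in $\tilde O(2^n)$ space.

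Given the list of relevant vectors, CVP is solved by the \emph{iterative slicer}: to find the lattice point nearest a target $\t$, it suffices to move $\t$ to the unique representative of $\t + \Lambda$ lying in $V(\Lambda)$. One repeatedly picks a relevant vector $\v$ with $\|\t - \v\|_2 < \|\t\|_2$ and replaces $\t$ by $\t - \v$; when no such $\v$ exists, $\t$ lies in $V(\Lambda)$ and the subtracted vectors sum to the answer. A naive step count depends on $\|\t\|$, so the key refinement is a doubling reduction: first bring $\t$ inside $2V(\Lambda) = V(2\Lambda)$ by recursively solving CVP on $2\Lambda$, after which only $2^{O(n)}$ slicer steps are needed, each scanning $O(2^n)$ relevant vectors. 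This yields CVP in $n^{O(1)}2^{2n}$ time.

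The remaining and most delicate ingredient is computing the Voronoi relevant vectors themselves — this is the main obstacle, since that task is already a family of closest-vector-type problems. Micciancio and Voulgaris build the cell rank by rank: with the Voronoi cell of a rank-$k$ sublattice known one can run CVP there, and the relevant vectors of a rank-$(k+1)$ lattice are recovered by solving $2^{O(n)}$ such CVP instances (one per coset of the sublattice, optimizing over a bounded range of the new coordinate); doing this for $k = 1,\dots,n$ costs $n^{O(1)}2^{2n}$ overall. Finally, SVP reduces to CVP: a shortest nonzero vector of $\Lambda$ lies in some nonzero coset $\c + 2\Lambda$ with $\c \in \Lambda/2\Lambda$, where it is a closest vector of $2\Lambda$ to $-\c$; enumerating all $2^n-1$ nonzero cosets and solving the corresponding CVP in $2\Lambda$ produces the shortest vector, again in $n^{O(1)}2^{2n}$ time (and $\tilde O(2^n)$ space), which is the claimed bound.
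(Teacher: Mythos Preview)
The paper does not prove Theorem~\ref{SVP}; it is simply quoted as Corollaries~4.3 and~4.4 of Micciancio and Voulgaris, and no argument is given beyond the citation. Your proposal is a faithful and accurate summary of the Micciancio--Voulgaris Voronoi-cell approach (relevant-vector bound, iterative slicer with the doubling trick, rank-by-rank computation of the cell, and the coset reduction from SVP to CVP), so it is correct and matches the source the paper relies on.
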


This result, for the first time, allows the complexity of SVP to be much smaller than the  complexity of Lenstra's algorithm.
\subsection{Projection}
We will need the following simple lemma, which can, for instance, be found in \cite{henk97}. We indicate it here with a proof to give a precise complexity. The proof uses the common fact that if $B$ is a basis for a lattice $\Lambda$, and $U$ is a unimodular matrix, then $BU$ is also a basis for $\Lambda$. 
\begin{lemma}
\label{basis}
Suppose $\Lambda \subset \Z^n$ is a lattice with basis $\{ \b_1, \dots, \b_m\}$ and $m \geq 2$. Suppose $\d \in \Lambda\setminus \{\0\}$ is \emph{primitive} 
(i.e., $\alpha \d \notin \Lambda$ for all $0 < \alpha < 1$), and let $\lambda_i \in \Z$ for $i=1, \dots, m$ such that $\d = \lambda_1 \b_1 + \dots + \lambda_m \b_m$. Then there exists an algorithm that computes vectors $\bar{\b}_2, \dots ,\bar{\b}_{m}$ such that $\{\d,\bar{\b}_2, \dots , \bar{\b}_{m}\}$ is a basis for $\Lambda$. This algorithm has time-complexity $(n \log L)^{O(1)}$ where $L$ is an upper bound on the absolute values of $\lambda_i$ and the entries of $\b_i$ for all $i=1, \dots, m$.
\end{lemma}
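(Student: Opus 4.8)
The plan is to pass to the coordinate system given by $B=[\b_1,\dots,\b_m]$, in which the problem becomes the classical task of completing a primitive vector to a lattice basis. Writing $\boldsymbol\lambda=(\lambda_1,\dots,\lambda_m)^T$, I will construct a unimodular matrix $U\in\Z^{m\times m}$ whose first column is $\boldsymbol\lambda$, and then set $[\d,\bar{\b}_2,\dots,\bar{\b}_m]:=BU$. Because $U$ is unimodular, $BU$ is again a basis of $\Lambda$, and its first column is $B\boldsymbol\lambda=\sum_i\lambda_i\b_i=\d$; so this is exactly the basis the lemma asks for. It therefore remains to show that such a $U$ exists and that $U$, together with the product $BU$, can be produced in time $(n\log L)^{O(1)}$.

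\emph{Existence of $U$.} First note that $\gcd(\lambda_1,\dots,\lambda_m)=1$: if an integer $g>1$ divided every $\lambda_i$, then $\tfrac1g\d=\sum_i\tfrac{\lambda_i}{g}\b_i$ would be a lattice vector with $0<\tfrac1g<1$, contradicting the primitivity of $\d$. Given $\gcd=1$, I build a unimodular $V$ with $V\boldsymbol\lambda=\e_1$ by the standard iterated two-variable Euclidean reduction on the coordinates of $\boldsymbol\lambda$: once a step has turned coordinate $1$ into $g_{k-1}:=\gcd(\lambda_1,\dots,\lambda_{k-1})$ and coordinates $2,\dots,k-1$ into $0$, I apply the $2\times2$ unimodular operation on coordinates $1$ and $k$ built from a B\'ezout identity $a\,g_{k-1}+b\,\lambda_k=g_k:=\gcd(g_{k-1},\lambda_k)$, which sends coordinate $1$ to $g_k$ and coordinate $k$ to $0$ (after an initial swap of coordinates if $\lambda_1=0$). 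When $k$ reaches $m$, coordinate $1$ equals $\gcd(\lambda_1,\dots,\lambda_m)=1$ and all others are $0$, so $V\boldsymbol\lambda=\e_1$. Writing $V=V_m\cdots V_2$ for the product of the elementary factors, I take $U:=V^{-1}=V_2^{-1}\cdots V_m^{-1}$, which is unimodular and satisfies $U\e_1=V^{-1}\e_1=\boldsymbol\lambda$, i.e.\ has first column $\boldsymbol\lambda$.

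\emph{Complexity.} Each gcd and B\'ezout computation acts on integers of absolute value at most $L$ and costs $\mathrm{poly}(\log L)$, and the B\'ezout coefficients and quotients appearing in the $2\times2$ blocks can be taken of absolute value at most $L$; hence every elementary factor $V_k$, and likewise every inverse $V_k^{-1}$ (again a $2\times2$ matrix of determinant $\pm1$), has entries bounded by $L$. Consequently $U=V_2^{-1}\cdots V_m^{-1}$ has entries of magnitude at most $(2L)^{m-1}$, that is, of bit-length $O(m\log L)=O(n\log L)$, and all $m^2\le n^2$ of them are obtained with $\mathrm{poly}(n)$ arithmetic operations. Forming $BU$ multiplies an $n\times m$ integer matrix with entries bounded by $L$ by $U$, producing entries of bit-length $\mathrm{poly}(n,\log L)$ in $\mathrm{poly}(n)$ operations. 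Altogether the running time is $(n\log L)^{O(1)}$.

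The only step that is not purely mechanical is the bit-length bookkeeping: one must observe that the naive product of the $m-1$ elementary unimodular matrices, although it can have exponentially large entries, has entries of only $O(n\log L)$ bits — polynomial in the input size — so that no Hermite-normal-form algorithm with explicit modular size control is needed here. (If genuinely small output vectors $\bar{\b}_2,\dots,\bar{\b}_m$ were desired, one could afterwards reduce each of them modulo $\d$ without affecting the basis property, but this is unnecessary for the stated bound.)
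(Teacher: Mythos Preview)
Your proof is correct and rests on the same idea as the paper's: find a unimodular $U\in\Z^{m\times m}$ whose first column is $\boldsymbol\lambda$, and take $BU$ as the new basis. The difference is in how $U$ is produced. The paper forms the matrix $A$ with first column $\boldsymbol\lambda$ and remaining columns $\e_2,\dots,\e_m$, invokes a Hermite normal form factorization $A=UT$ as a black box (citing Storjohann--Labahn for the polynomial bit-complexity), and then reads off $\bar{\b}_1=\d$ from the upper-triangularity of $T$ together with primitivity. You instead unroll the HNF computation for this particular $A$: since only the first column is nontrivial, HNF reduces to an iterated extended Euclidean algorithm on the $\lambda_i$, which you carry out explicitly and whose bit-growth you track by hand. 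Your route is more elementary and self-contained; the paper's route delegates the size analysis to a known result and would scale more readily if $A$ had several nontrivial columns. Both arrive at the stated $(n\log L)^{O(1)}$ bound.
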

\begin{proof}
Let $B = [\b_1, \dots, \b_m]$ and let $B' = [\d, \b_2, \dots, \b_m]$. Without loss of generality, we assume $B'$ has rank $m$, otherwise we can simply reorder the basis vectors. Let $A\in \Z^{n\times n}$ such that $B' = B A$. 
We now decompose $A$ into Hermite normal form, which can be done in polynomial time in the input size and the dimension~\cite{kannan79}. That is, we find a unimodular matrix $U\in \Z^{n\times n}$ and an upper triangular matrix $T\in \Z^{n\times m}$ such that $A = U T$; therefore, $B' = (BU) T$. 
There are several algorithms to compute Hermite normal form. For a worst case complexity bound, we use Storjohann and Labahn \cite{sto96} with run time $O(n^3M(nL))$ where $L$ is a bound on the maximum binary encoding length of each entry of $A$, and $M(t)$ is the time required multiply two numbers of size~$\left\lceil t \right\rceil$. The entries of $A$ are all $1$'s, $0$'s, and $\lambda_i$'s. 
Since $U$ is unimodular, $BU = [\bar{\b}_1, \dots ,\bar{\b}_m]$ is a basis for $\Lambda$. Since $T$ is upper triangular, we find that $T_{11} \bar{\b}_1 = \d$, and because $\d$ is primitive and $ \bar{\b}_1\in \Lambda$, we have $\bar{\b}_1 = \d$.
Thus $\{\d, \bar{\b}_2, \dots, \bar{\b}_{m}\} $ is a basis for $\Lambda$.
\end{proof}
For the case where $\Lambda = \Z^n$, we can choose $B = [\e_1,\dots, \e_n]$. For Lenstra's algorithm, $||\d||_2 \leq n$, hence the time complexity simply becomes $n^{O(1)}$.\\
After choosing a flatness direction $\d$, if the width of the inner ellipsoid is smaller than $\omega(n)$, we will project into hyperplanes perpendicular to $\d$. According to Lemma \ref{basis}, we compute vectors $\b_1, \dots, \b_{n-1} \in \Z^n$ such that $B = [\b_1, \dots,\b_{n-1}, \d]$ is a basis for the lattice $\Z^n$. We now consider the projection of $Y$ into the hyperplane $\d^T\x = t$,
$$Y_t = \{ \tilde{\x} \in \R^{n-1} : B [\tilde{\x},t] \in Y\}. $$
This step must ensure the set $Y_t$ is also a convex set in class $\mathcal{C}$ to allow for the recursion to work.
Since $\d$ is a flatness direction of $E(A,\a)$, we know that $|\d^T (\x-\a) | \leq \omega(n)\beta(n)$ for every $\x \in E(A,\a)$ and it suffices to choose $t$ in the set 
$$
\{ z + \left\lfloor \d^T \a\right\rfloor : |z| \leq \omega(n)\beta + 1, z \in \Z\},
$$
which has fewer than $2 \omega(n) \beta + 3$ elements. This means that if the algorithm runs to its full extent, the total number of subcases it will have to evaluate is
\begin{equation}
\label{subcases}
\text{\textbf{worst $\#$ of subcases} } = \prod_{i=1}^n (2\omega(i)\beta(i) + 3).
\end{equation}
Heinz and Khachiyan and Porkolab follow Lenstra's original algorithm which uses $\omega(n) = 2^{O(n^2)}$ and $\beta(n) = O(n^{3/2})$, which leads to a total number of subcases
$$
\textbf{ original worst $\#$ of subcases} = \prod_{i=1}^n (2^{O(i^2)}O(i^{3/2} )+ 3) = 2^{O(n^3)}.
$$
Our new algorithm has three important features. First, it applies the SVP algorithm of Micciancio and Voulgaris to obtain a flatness direction in $n^{O(1)}2^{2n}$ time as opposed to the previous $2^{O(n \log n)}$ time of Kannan.  We use the best known flatness constant for ellipsoids, $\omega(n) = n$. And we achieve an $O(n)$-rounding by using $m:= n2^n$ test points for our separation oracle. This is the first time, to our knowledge, that this choice has been made, and we point out the important fact that making this choice improves the exponential coefficient in the final complexity. In our algorithm, the worst case number of subcases reduces to 
\begin{equation}
\textbf{modern $\#$ of subcases} = \prod_{i=1}^n(2(i)(O(i)) + 3) =  2^{n}(n!)^2 \leq
2^{2n\log_2(n)+ n}. 
\end{equation}
\section{Lenstra-type Algorithm}
 
Here we state a modern Lenstra-type algorithm for a class $\mathcal{C}$ of convex sets and we comment on the overall complexity. This algorithm requires that $\mathcal{C}$ be closed under projection into affine hyperplanes in $\R^n$. 
\vspace{-.5cm}
\begin{center}
\line(1,0){350}\\
\end{center}

\vspace{-.3cm}
\noindent\textbf{Input:} A convex set $Y\subset \R^n$ in class $\mathcal{C}$.\\
\textbf{Output:} A point $\x^* \in Y \cap \Z^n$ or confirmation that no such point exists.
\vspace{-.8cm}
\begin{center}
\line(1,0){350}\\
\end{center}
\vspace{-.3cm}
\textbf{PROCEDURE:}
\begin{enumerate}
\setlength{\itemsep}{0pt}
\item \textbf{Bounds:} Determine $R \in \Z_+$ and an $\epsilon > 0$ such that $Y \subseteq B(\0,R)$ and if $Y \cap \Z^n \neq \emptyset$, then $\vol(Y) > \epsilon$.
\item \textbf{Ellipsoid Rounding:} Compute an ellipsoid $E(A,\a)$ for such an $\epsilon$ such that either 
\begin{enumerate}
\item $Y \subseteq E(A,\a)$ and $\vol(E(A,\a)) \leq \epsilon$, or
\item $E(A,\a)$ is a $\beta$-rounding of $Y$. 
\end{enumerate}
If we are in case (a), then no such point exists.\\
Otherwise we proceed as we are in case (b).
\item \textbf{Flatness Direction:} Compute a flatness direction $\d\in \Z^n$ of $E(\frac{1}{\beta^2} A,\a)$. Either
\begin{enumerate}
\item $w(E(\frac{1}{\beta^2} A,\a)) > \omega(n)$, then there exists a point $ \x^* \in E(\frac{1}{\beta^2} A, \a) \subset Y \cap \Z^n$, which we can compute by solving a closest vector problem, or 
\item proceed knowing that $w(E(A,\a)) \leq \omega(n)\beta(n)$.
\end{enumerate}
\item \textbf{Sublattice:} Compute vectors $\b_1, \dots, \b_{n-1} \in \Z^n$ such that \\$B = [\b_1, \dots, \b_{n-1} ,\d]$ is a lattice basis for $\Z^n$.
\item \textbf{Project: } For each $t \in \{ z + \left\lfloor \d^T \a\right\rfloor : |z| \leq \omega(n)\beta(n) + 1, z \in \Z\},$ 
solve the $n-1$ dimensional integer feasibility subproblem on the set
$$
Y_t = \{ \tilde{\x} \in \R^{n-1} : B [\tilde{\x},t] \in Y\}.
$$
\end{enumerate}
\vspace{-1cm}
\begin{center}
\line(1,0){350}\\
\end{center}

Considering Lenstra's algorithm in the form presented here, we find it has time complexity of
$$
\underbrace{(n + \langle R \rangle + \langle \epsilon \rangle)^{O(1)}}_{\text{Ellipsoid Rounding} }
\underbrace{(\text{Shallow Cut} + m \times \text{feasible test})}_{\text{Shallow Cut Oracle}}
\underbrace{n^{O(1)}2^{2n}}_{\mathrm{SVP/CVP}}
\underbrace{n^{O(1)}}_{\text{Sublattice}}
\underbrace{2^{2n\log_2(n) + n}}_{\text{Subcases}},
$$
where we have left the shallow cut and the feasibility test as unknowns since they are dependent on the class $\mathcal{C}$.
\section{Quasiconvex Shallow Cuts}
\label{quasiconvexMinimization}
In this section we will show that the modern Lenstra-type algorithm can be applied to convex sets given by quasiconvex polynomial inequalities. We will begin with a contribution on efficiently encoding polynomials to exploit sparsity. We will then review properties of quasiconvex polynomials that will be useful for making shallow cuts and present our shallow cut oracle.
\subsection{Polynomial Encoding} 
In this paper, we allow our complexity results to vary based on the encoding scheme chosen for the polynomials. Multi-variable polynomials can be presented in a list of the coefficients of all the monomials up to degree $d$, requiring a large storage space. This is typically referred to as a \emph{dense encoding}. Under this scheme, the following remark holds.
\begin{remark}[Remark 2.1 in~\cite{heinz05}]
Let $F \in \Z[\x]$ be a polynomial of total degree $d$ at most with integer coefficients of binary length bounded by $l$. Moreover, let $\hat{\x} \in \mathbb{Q}^n$ be a fixed point with binary encoding size $\langle \hat{\x} \rangle < r$. Then there is an algorithm with time complexity $(l r n)^{O(1)} d^{O(n)}$ and output-complexity $(l+r) (d n)^{O(1)}$ which computes the value of the function $F$ and the gradient $\nabla F$ at the point $\hat{\x} \in \mathbb{Q}^n$.
\end{remark}
\noindent This time-complexity, however, is too pessimistic; for example, it seems to require $n^{O(n)}$ time to evaluate a monomial of degree $d = n$.\\

An alternative is \emph{sparse encoding}, where monomials are listed with their non-zero exponents and their coefficients, allowing for a more concise representation for short polynomials and a more refined time-complexity analysis. Polynomials and their gradients can then be evaluated in $(l r d M n)^{O(1)}$ time, where $M$ is a bound on the number of monomials in the polynomial. This scheme is potentially problematic in Lenstra's algorithm because each subproblem is realized by intersecting our region with a hyperplane, which would cause a loss of sparsity (fill-in). For instance, if the given polynomial is $x_n^d$ and our hyperplane is $x_n = x_1 + \dots + x_{n-1} + 1$, then in the reduced dimension it becomes $(x_1 + \dots + x_{n-1} + 1)^d$. We note that in the algorithm, we never expand these expressions, allowing sparse encoding to continue to be useful. We instead leave the polynomials alone and store coordinate transformation matrices at each step and then compute the coordinates in the original space to input into the polynomials. Gradients are computed via the chain rule. For context, this is discussed in more detail in Remark \ref{injection}.\\
\subsection{Quasiconvex Polynomials}
A function $F\colon \R^n \to \R$ is called \textit{quasiconvex} if all the lower level sets $\{\x \in \R^n \colon F(\x) \leq \alpha\}, \alpha \in \R$ are convex subsets of $\R^n$. Although quasiconvex functions are not necessarily convex, all convex functions are quasiconvex. We follow~\cite{heinz05} for a review on quasiconvex polynomials.
\begin{figure}
\begin{center}
\ifpdf
\includegraphics[trim = 6in 4.5in 6in 4.5in]{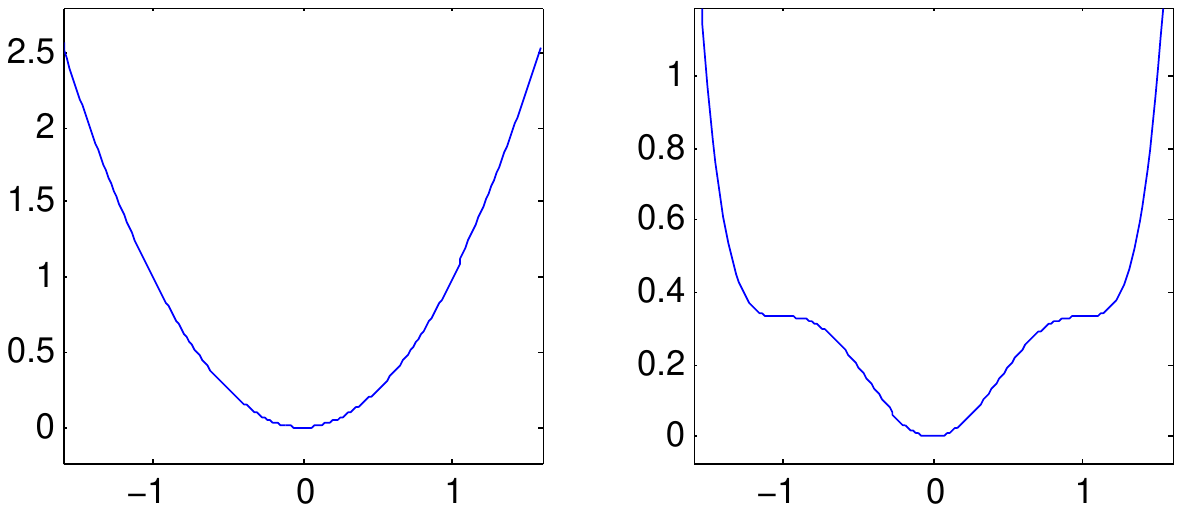}
\else
\includegraphics{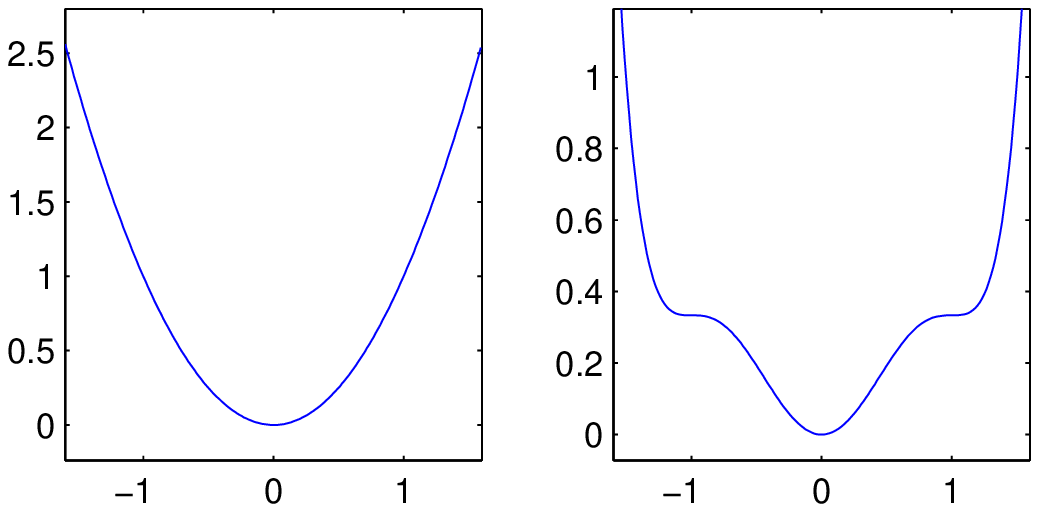}
\fi
\\
\end{center}
\vspace{-.3in} (a) \hspace{2.25in} (b)\hfill \\
\caption{(a) Convex polynomial $ f(x) = x^2$, (b) quasiconvex polynomial $g(x) = x^2 - x^4 + \frac{x^6}{3}$}
\end{figure}
\begin{lemma} [Section 4.1, Remark 1 in~\cite{bank88}]
\label{strict}
Let F $\in \R[\x]$ be a quasiconvex polynomial, $\a\in \R^n$ a fixed point and $\b \in \R^n$, $\b \neq \0$ a fixed vector. If the polynomial $F(\a+ \lambda \b)$ in $\lambda \in \R$ is strongly decreasing (or
constant, respectively), then $F(\x + \lambda \b)$ is strongly decreasing (or constant, respectively) for all $\x \in \R^n$.
\end{lemma}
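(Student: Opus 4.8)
The plan is to restrict $F$ to every line parallel to $\b$ and argue entirely with one-variable polynomials. For $\x\in\R^n$ set $g_{\x}(\lambda):=F(\x+\lambda\b)$; this is a polynomial in $\lambda$, and since $\{\lambda:g_{\x}(\lambda)\le\alpha\}$ is the preimage of the convex level set $L_{\alpha}:=\{\y:F(\y)\le\alpha\}$ under the affine map $\lambda\mapsto\x+\lambda\b$, it is an interval, so each $g_{\x}$ is a quasiconvex function of one variable. Each $L_{\alpha}$ is moreover closed, because $F$ is continuous. I would invoke three elementary facts: (i) a non-increasing univariate polynomial is either constant or strictly decreasing (its derivative is nonpositive and, unless identically zero, has only finitely many roots); (ii) a strictly decreasing polynomial tends to $-\infty$ as $\lambda\to+\infty$ and to $+\infty$ as $\lambda\to-\infty$ (it is non-constant and bounded on one side of the origin); and (iii) for a nonempty closed convex set $C$, if $C$ contains at least one ray of direction $\b$ then $C+\R_{\ge 0}\b\subseteq C$, and if $C$ contains a full line in direction $\b$ then both $\b$ and $-\b$ lie in the recession cone of $C$ (Rockafellar, \emph{Convex Analysis}, Thm.~8.3).

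For the strongly decreasing case, fact (ii) gives $g_{\a}(\lambda)\to-\infty$ as $\lambda\to+\infty$, so for every $\alpha$ with $L_{\alpha}\neq\emptyset$ the set $L_{\alpha}$ contains a tail of the line $\a+\R\b$, i.e.\ a ray of direction $\b$; by (iii), $\b$ then lies in the recession cone of $L_{\alpha}$. Monotonicity of an arbitrary $g_{\x}$ is immediate: for $\lambda_{1}<\lambda_{2}$, with $\alpha:=g_{\x}(\lambda_{1})$ we have $\x+\lambda_{1}\b\in L_{\alpha}$, hence $\x+\lambda_{2}\b=(\x+\lambda_{1}\b)+(\lambda_{2}-\lambda_{1})\b\in L_{\alpha}$, that is $g_{\x}(\lambda_{2})\le g_{\x}(\lambda_{1})$. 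By (i) each $g_{\x}$ is thus constant or strictly decreasing, and it remains to exclude the constant alternative. If $g_{\x}\equiv c$, the whole line $\x+\R\b$ lies in $L_{c}$, so by (iii) $-\b$ also lies in the recession cone of $L_{c}$; picking $\lambda_{1}$ with $g_{\a}(\lambda_{1})\le c$ gives $\a+\lambda_{1}\b\in L_{c}$, hence $\a+(\lambda_{1}-\mu)\b\in L_{c}$ for all $\mu\ge 0$, i.e.\ $g_{\a}(\lambda)\le c$ for all $\lambda\le\lambda_{1}$ — contradicting $g_{\a}(\lambda)\to+\infty$ as $\lambda\to-\infty$. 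So every $g_{\x}$ is strictly decreasing.

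For the parenthetical constant case $g_{\a}\equiv c$: for every $\alpha\ge c$ the line $\a+\R\b$ lies in $L_{\alpha}$, so by (iii) both $\b$ and $-\b$ lie in the recession cone of $L_{\alpha}$. Given any $\x$, take $\alpha:=\max(c,F(\x))$; then $\x\in L_{\alpha}$, so $\x+\R\b\subseteq L_{\alpha}$, i.e.\ $g_{\x}$ is bounded above on all of $\R$. A quasiconvex univariate polynomial that is bounded above must be constant — being non-increasing on some $(-\infty,\lambda_{0}]$ and non-decreasing on $[\lambda_{0},\infty)$, a non-constant one would tend to $+\infty$ on at least one side. Hence $g_{\x}$ is constant.

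The one step I expect to be genuinely non-routine is ruling out the degeneration of $g_{\x}$ to a constant in the decreasing case: this is precisely where the convexity of the level sets must be used globally — through the recession and lineality directions of $L_{c}$ — rather than merely along a single line, and it is also what makes the nonconstant (strictly) decreasing hypothesis essential. Everything else reduces to the standard one-variable polynomial facts (i)--(ii) and the elementary recession-cone statement (iii).
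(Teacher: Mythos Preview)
The paper does not give its own proof of this lemma; it is quoted verbatim from \cite{bank88} (Section~4.1, Remark~1) and used as a black box. So there is nothing to compare against on the paper's side.

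Your argument is correct. The reduction to one-variable restrictions $g_{\x}$, together with the recession-cone fact (iii) for the closed convex level sets $L_{\alpha}$, is exactly the right mechanism, and your treatment of the delicate point --- ruling out that some $g_{\x}$ degenerates to a constant when $g_{\a}$ is strictly decreasing --- is clean: the full line $\x+\R\b\subset L_c$ forces $-\b$ into the recession cone of $L_c$, which then contradicts $g_{\a}(\lambda)\to+\infty$ as $\lambda\to-\infty$. The constant case is also fine; the ``unimodal'' characterization of univariate quasiconvex functions you invoke is standard, and combined with the polynomial growth dichotomy it forces $g_{\x}$ to be constant once it is bounded above. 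One cosmetic remark: the parenthetical justification you give for (ii), ``it is non-constant and bounded on one side of the origin,'' is garbled (a strictly decreasing polynomial is unbounded on both sides); the statement of (ii) itself is of course correct and needs no elaboration.
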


This lemma does not necessarily hold if the function is not a polynomial. Consider an example from \cite{boyd2004},  
$
f(\x) = \max \{ i \colon x_i \neq 0\}.
$
This is quasiconvex because all the lower level sets are subspaces, for example  
$\{\x
\colon f(\x) < n\} = \{ \x \colon  x_n = 0\}$.
This is a counterexample since $f( (x_1, ..., x_{n-1}, 1)  ) = n $ (a constant), but $f( (x_1, ..., x_{n-1}, 0)  )$  can vary with the remaining input.

  Lemma \ref{strict} can be used to determine if a quasiconvex polynomial is constant.
\begin{corollary}[Corollary 2.3 in~\cite{heinz05}] 
\label{constant}
Let $F \in \R[\x]$ be a quasiconvex polynomial of degree $d$ at most, $\a \in \R^n$ a
point, and let the set $\{\b_1, \dots, \b_n\} \subset \R^n$ be a basis of $\R^n$. If for every $i = 1, \dots, n$, there
are pairwise distinct real numbers $\lambda_{i1}, \dots , \lambda_{id} \in \R$ satisfying $\nabla F ( \a + \lambda_{ij} \b_i ) = \0$
for all $j = 1, \dots , d,$ then the polynomial $F$ is constant.
\end{corollary}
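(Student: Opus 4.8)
The plan is to show that $F$ is constant by proving it is constant along each of the $n$ lines $\a+\R\b_i$, then using Lemma~\ref{strict} to propagate constancy to all parallel lines, and finally sweeping through the basis $\{\b_1,\dots,\b_n\}$ to reach every point of $\R^n$.

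First I would fix $i\in\{1,\dots,n\}$ and set $g_i(\lambda):=F(\a+\lambda\b_i)$, a polynomial in the single variable $\lambda$. Since $F$ has total degree at most $d$, $g_i$ has degree at most $d$, so $g_i'$ has degree at most $d-1$. By the chain rule, $g_i'(\lambda)=\nabla F(\a+\lambda\b_i)^{T}\b_i$, and the hypothesis $\nabla F(\a+\lambda_{ij}\b_i)=\0$ forces $g_i'(\lambda_{ij})=0$ for the $d$ pairwise distinct scalars $\lambda_{i1},\dots,\lambda_{id}$. A nonzero univariate polynomial of degree at most $d-1$ cannot have $d$ distinct roots, so $g_i'\equiv 0$; hence $\lambda\mapsto F(\a+\lambda\b_i)$ is constant.

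Next, applying the ``constant'' alternative of Lemma~\ref{strict} with the point $\a$ and the vector $\b_i$, I obtain that $\lambda\mapsto F(\x+\lambda\b_i)$ is constant for every $\x\in\R^n$, i.e. $F(\x+\lambda\b_i)=F(\x)$ for all $\x\in\R^n$ and all $\lambda\in\R$. Carrying this out for each $i$, and then writing an arbitrary $\y\in\R^n$ as $\y=\a+c_1\b_1+\dots+c_n\b_n$ (possible since the $\b_i$ form a basis), I peel off the summands one direction at a time: $F(\y)=F(\a+c_1\b_1+\dots+c_{n-1}\b_{n-1})=\dots=F(\a)$. Thus $F\equiv F(\a)$, so $F$ is constant.

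I do not anticipate a real obstacle; the argument is elementary. The only steps needing care are the basic fact that a nonzero univariate polynomial of degree $<d$ cannot have $d$ distinct roots, and the correct use of Lemma~\ref{strict} to upgrade constancy along a single line to constancy along all parallel lines. That latter step is precisely where the quasiconvexity (and polynomiality) of $F$ enters: as the example $f(\x)=\max\{i:x_i\neq 0\}$ recalled in the text shows, the conclusion genuinely fails for quasiconvex functions that are not polynomials.
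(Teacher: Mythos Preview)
The paper does not supply its own proof of this corollary; it merely quotes it from Heinz~\cite{heinz05}. Your argument is correct and is precisely the intended derivation from Lemma~\ref{strict}: constancy of each one-variable restriction $\lambda\mapsto F(\a+\lambda\b_i)$ via the root count on $g_i'$, propagation to all parallel lines by Lemma~\ref{strict}, and then a telescoping sweep through the basis.
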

The following lemma is  important for generating shallow cuts.
\begin{lemma}[Lemma 2.4 in~\cite{heinz05}]
Let $F \in \R[\x]$ be a quasiconvex polynomial and let $\y \in \R^n$ be a fixed point.
If $F(\y) \geq 0$ and $\nabla F(\y) \neq \0$, for every other $\x \in \R^n$ that satisfies $F(\x) < 0$, we have that 
$$
\nabla F(\y) \cdot \x \leq \nabla F(\y) \cdot \y .
$$
\end{lemma}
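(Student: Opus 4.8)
The plan is to reduce the statement to a one-variable assertion along the segment joining $\y$ to $\x$, and then invoke the first-order necessary condition for a maximum. Fix an arbitrary $\x \in \R^n$ with $F(\x) < 0$ and define $g(\lambda) = F\bigl(\y + \lambda(\x-\y)\bigr)$ for $\lambda \in [0,1]$. Since $F$ is a polynomial and $\lambda \mapsto \y + \lambda(\x-\y)$ is affine, $g$ is a univariate polynomial, hence differentiable on $\R$, with $g(0) = F(\y)$ and $g(1) = F(\x)$.

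The key step is to use quasiconvexity to control $g$ on $[0,1]$. Set $\alpha = \max\{F(\y),F(\x)\}$; because $F(\x) < 0 \le F(\y)$ (this is exactly where the hypothesis $F(\y)\ge 0$ enters), we have $\alpha = F(\y)$. Both $\y$ and $\x$ belong to the lower level set $\{\z \in \R^n : F(\z) \le \alpha\}$, which is convex by quasiconvexity, so the entire segment lies in it; that is, $g(\lambda) \le F(\y) = g(0)$ for every $\lambda \in [0,1]$. Thus $g$ attains its maximum over $[0,1]$ at the left endpoint $\lambda = 0$.

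From here the argument is routine: for $\lambda \in (0,1]$ the difference quotient $\bigl(g(\lambda)-g(0)\bigr)/\lambda \le 0$, and letting $\lambda \to 0^+$ yields $g'(0) \le 0$. By the chain rule $g'(0) = \nabla F(\y) \cdot (\x - \y)$, so $\nabla F(\y) \cdot \x \le \nabla F(\y) \cdot \y$, which is the desired inequality. Since $\x$ was an arbitrary point of $\{F < 0\}$, the proof is complete.

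I do not anticipate a genuine obstacle here; the only point requiring care is the bookkeeping of signs in the quasiconvexity inequality, namely that $F(\x) < 0 \le F(\y)$ forces $\max\{F(\y),F(\x)\} = F(\y)$, so that the level set used is precisely $\{F \le F(\y)\}$. Note that neither Lemma~\ref{strict} nor Corollary~\ref{constant} is needed: the argument uses only differentiability of $F$ along lines and the definition of quasiconvexity via convexity of level sets. The hypothesis $\nabla F(\y) \neq \0$ is not used in establishing the inequality; it is recorded in the statement because it is what guarantees that the resulting half-space $\{\x : \nabla F(\y)\cdot\x \le \nabla F(\y)\cdot\y\}$ is a proper cut when the lemma is applied inside the shallow cut separation oracle of section~\ref{quasiconvexMinimization}.
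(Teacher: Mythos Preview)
Your proof is correct. The paper does not supply its own proof of this lemma; it simply quotes the statement from Heinz~\cite{heinz05} and uses it as a black box in the shallow cut separation oracle. Your argument---restricting $F$ to the segment from $\y$ to $\x$, using quasiconvexity of the level set $\{F \le F(\y)\}$ to conclude that the one-variable restriction is maximized at $\lambda=0$, and then reading off $g'(0)\le 0$ via the chain rule---is the standard proof, and your remark that the hypothesis $\nabla F(\y)\neq \0$ is needed only so that the resulting half-space is a genuine cut (rather than for the inequality itself) is accurate.
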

As mentioned before, the class of convex sets used must be closed under intersections with affine hyperplanes.  We will  also require that an ellipsoid bound $\x^T A_0 \x < R$ reduce to similar ellipsoid bound.
\begin{remark}[Within the proof of Theorem 4.2 in~\cite{heinz05}]
\label{polyreduction}
Let $F_0, \dots, F_{s} \in \Z[\x]$ be quasiconvex polynomials, $R \in \Z$, $A_0 \in \Z^{n \times n}$ a positive definite matrix, 
and $F_{s+1} \in \Z[\x]$ a polynomial defined by $F_{s+1}(\x) := -R + \x^T A_0 \x$, for $\x\in \R^n$. 
Moreover, let the binary length of the coefficients be bounded by $l$, let $d$ be an upper bound for the degree of the polynomials. 
Let $B \in \Z^{n\times n}$ be nonsingular, $t \in \Z$, with entries of $B$ and $t$ of binary length at most $l(d n)^{O(1)}$. Let 
$$
Y = \{ \x \in \R^n : F_i(\x) < 0 , i = 1, \dots, s+1\}
$$
and let
$$
Y_t := \{\tilde{\x} \in \R^{n-1} : B [\tilde{\x},t] \in Y\}.
$$
%
Consider the set $Y_t$ and the new coordinates $\tilde{x}_1, \dots, \tilde{x}_{n-1}$ induced by $\x = B [\tilde{\x}, t]$, 
fixing the last coordinate $\tilde{\x}_n = t$ and rewriting the quasiconvex polynomials in terms of the new coordinates. 
The maximum binary length of all coefficients belonging to the new polynomials 
$\tilde{F}_0, \dots, \tilde{F}_{s+1} \in \Z[\tilde{x}_1, \dots, \tilde{x}_{n-1} ]$, is $l (d n)^{O(1)}$. \\
Furthermore, all new polynomials are quasiconvex since the transformation is linear and 
$F_{s+1}$ preserves its form for a new suitable $\tilde{A}_0$. 
The degree bound $d$ and the number $s$ of polynomials remain unchanged, but the number of coordinates reduces by one. 
\end{remark}
\begin{remark}
\label{injection}
Following the notation of Remark \ref{polyreduction}, we will explain here how we evaluate the polynomials and their gradients under the sparse encoding scheme. Suppose $k+1$ of such coordinate transformations $B^n, \dots , B^{n-k} \in \Z^{n\times n}$ are done to produce the variable $\tilde{\x}^{n-k} \in \Z^{n-k-1}$. Each $B^{n-i}$ is a block diagonal matrix where the last block is an identity matrix of size $i$. In each transformation $B^{n-i}$, we are restricting the last variable to be $t_i$. A polynomial $F$ transformed into the new coordinates we will denote as $\tilde{F}^{n-k}$. For a given $\tilde{\x}^{n-k} \in \Z^{n-k}$, we can compute $\tilde{F}^{n-k}(\tilde{\x}^{n-k})$ as
$$
\tilde{F}^{n-k}(\tilde{\x}^{n-k}) = F( \x)
$$
where
$$
\x = B^n B^{n-1} \cdots B^{n-k} \begin{bmatrix} \tilde{\x}^{n-k}\\ t_{n-k} \\ \vdots \\ t_n \end{bmatrix}.
$$
With the product $C^{n-k} = B^n\cdots B^{n-k}$ computed ahead of time, a depth first search allows us to store at most $n$ of these products at any given time. The partial derivatives of $\tilde{F}^{n-k}$ then have a simple representation as
$$
\frac{\partial \tilde{F}^{n-k}}{\partial \tilde{x}^{n-k}_i} = \nabla F(\x) \cdot\frac{\partial \x}{\partial \tilde{x}^{n-k}_i} = \nabla F(\x)\cdot C^{n-k}_i
$$
where $C^{n-k}_i$ is the $i^{\text{th}}$ column of $C^{n-k}$. 
\end{remark}
\subsection{Shallow Cuts}
The main result of \cite{heinz05} is derived from  Heinz's shallow cut separation oracle for quasiconvex polynomials. The following is an adaptation of Heinz's proof to allow for stronger ellipsoid roundings. Specifically, we show that his calculation for a basis direction to admit a shallow cut generalizes to having any direction admit a shallow cut.
Recall that we are solving the feasibility problem over the set 
$$
Y = \{\x\in \R^n : F_i(\x) < 0 \text{ for } i = 0, 1, \dots, s \}
$$
where all the $F_i's$ are quasiconvex polynomials with integer coefficients. Consider an ellipsoid $E(A,\a)$ and let $\{\b_1, \dots, \b_n\} \subset \R^n$ be an orthogonal basis of $\R^n$ according to the matrix~$A$ (where the inner product is given by $\langle \x, \y \rangle_{A} = \x^T A \y$). Define the affine map $\tau: \R^n \to \R^n$ such that
\begin{equation}
\tau(\x) := B^T(\x - \a)
\end{equation}
where
\begin{equation}
B := \left( \frac{\b_1}{||\b_1||_A}, \dots, \frac{\b_n}{||\b_n||_A} \right) \in \R^{n \times n}.
\end{equation}
Thus $\x^T \x \leq 1$ if and only if $\tau^{-1}(\x) \in E(A,\a)$.
\begin{theorem}[Shallow Cut Separation Oracle]
\label{shallow}
Let $\hat{c} > 1$ and let $m\colon \N \to \N$, such that $2n \leq m(n) \leq \hat{c}^n$. Then there exists a function $\beta\colon \N \to \R$ where $\beta(n) = O(\frac{n^{3/2}}{\log(m/n)})$ and an algorithm with the following input:\\
\indent $\mathrm{(I_1)}$ sparsely encoded quasiconvex polynomials $F_0, \dots, F_{s+1} \in \Z[\x]$ of total degree $d$, at most $M$ monomials in each, and whose coefficients' binary encoding lengths are bounded by $l$,\\
\indent $\mathrm{(I_2)}$ an ellipsoid $E(A, \a)$ containing $Y$ as defined in \eqref{Y}, where the binary encoding length of the columns of $A$ and of $\a$ are bounded by $r$,\\
and outputs one of the following answers:
\begin{enumerate}[\quad\rm 1.]
\setlength{\itemsep}{0pt}
\item confirmation that the ellipsoid $E(A,\a)$ is a $\beta$-rounding of $Y$, or
\item a vector $\c \in \Q^n, \c \neq \0$, with the property 
\begin{equation}
Y \subset \left\{\x \in \R^n : \c^T \x \leq \c^T\a+ \frac{1}{n+1} ||\c||_A \right\}.
\end{equation}
\end{enumerate}
This algorithm runs in time-complexity $s(l n r m M)^{O(1)}$
and with\\ output-complexity $(l+r)(d n)^{O(1)}$. 
\end{theorem}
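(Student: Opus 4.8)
The plan is to follow Heinz's template for the quasiconvex shallow cut oracle, substituting a rational approximation of Kochol's sphere-covering polytope (Theorem~\ref{rounding}, Corollary~\ref{rationalapprox}) for his axis-aligned test points, and checking that his cut construction --- which he carries out along coordinate directions --- goes through along an arbitrary direction. First I would set up the geometry using the $A$-orthogonal basis $\{\b_1,\dots,\b_n\}$ and the affine map $\tau(\x)=B^{T}(\x-\a)$ fixed before the statement. Since $B^{T}AB=I$ one has $A=(B^{-1})^{T}B^{-1}$, hence $\|\c\|_{A}=\|B^{-1}\c\|_{2}$ for every $\c$, and $\tau$ carries $E(A,\a)$ onto the Euclidean unit ball, isometrically for the $A^{-1}$-norm. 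Using Theorem~\ref{rounding} and Corollary~\ref{rationalapprox} I construct a rational set $\tilde V\subset\Q^{n}$ with $|\tilde V|\le m$, symmetric across all axes, whose points all have Euclidean norm at most $\tfrac1{n+1}$ (with a little room to spare, as in Heinz), and whose convex hull contains a ball $B(\tfrac1\beta,\0)$ with $\tfrac1\beta$ of order $\tfrac1n\sqrt{\log(m/n)/n}$; the entries of $\tilde V$ have bit size polynomial in $n$, since $\log m\le n\log\hat c$. The test points in the original space are $\y_{\v}:=\tau^{-1}(\v)=(B^{-1})^{T}\v+\a$ for $\v\in\tilde V$.

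The dichotomy is then the following. For each $\v\in\tilde V$ and each $i$ evaluate $F_{i}(\y_{\v})$ by sparse evaluation (Remark~\ref{injection}); recall $\y_{\v}\in Y$ iff $F_{i}(\y_{\v})<0$ for all $i$. If every test point is feasible, then $\tilde V\subseteq\tau(Y)$, so by convexity $\conv(\tilde V)\subseteq\tau(Y)$, hence $B(\tfrac1\beta,\0)\subseteq\conv(\tilde V)\subseteq\tau(Y)\subseteq B(1,\0)$; pulling back by $\tau^{-1}$ gives $E(\tfrac1{\beta^{2}}A,\a)\subseteq Y\subseteq E(A,\a)$, i.e.\ output~1, and reading $\beta$ off the inscribed radius together with the scaling factor $\tfrac1{n+1}$ gives $\beta(n)$ as stated. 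Otherwise I have a test point $\y:=\y_{\v}$ and an index $i$ with $F_{i}(\y)\ge0$. If also $\nabla F_{i}(\y)\ne\0$, put $\c:=\nabla F_{i}(\y)$: by the recalled gradient inequality for quasiconvex polynomials (Heinz's Lemma~2.4), every $\x\in Y$ satisfies $\c^{T}\x\le\c^{T}\y$, and since $\y-\a=(B^{-1})^{T}\v$ with $\|\v\|_{2}\le\tfrac1{n+1}$, Cauchy--Schwarz gives $\c^{T}(\y-\a)=(B^{-1}\c)^{T}\v\le\|\c\|_{A}\|\v\|_{2}\le\tfrac1{n+1}\|\c\|_{A}$, whence $Y\subseteq\{\x:\c^{T}\x\le\c^{T}\a+\tfrac1{n+1}\|\c\|_{A}\}$, which is output~2.

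The main obstacle is the degenerate case of an infeasible test point with $\nabla F_{i}(\y)=\0$ --- exactly the situation Heinz's argument is designed for, and the point I must re-examine for the non-axis directions produced by Kochol's construction. I would restrict $F_{i}$ to the lines through $\y$ in the directions $\b_{1},\dots,\b_{n}$ and, along each, sample $F_{i}$ and $\nabla F_{i}$ at $d$ distinct rational points chosen so their $\tau$-images still lie in $B(\tfrac1{n+1},\0)$, which is possible because the points of $\tilde V$ are bounded away from $\tfrac1{n+1}$. If some sample has nonnegative value and nonzero gradient, apply the construction of the previous paragraph there. If instead the gradient vanishes at all $d$ samples along every one of the $n$ lines, each univariate restriction has derivative (degree $\le d-1$) vanishing at $d$ points, hence is constant; by Lemma~\ref{strict} $F_{i}$ is then constant along each $\b_{k}$ through every point, so Corollary~\ref{constant} forces $F_{i}$ to be globally constant, necessarily $\ge0$, whence $Y=\emptyset$ and any nonzero $\c$ is a vacuously valid shallow cut. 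Checking that this trichotomy is exhaustive --- in particular that ``monotone through $\y$'' behaviour along a general direction cannot sneak an infeasible test point past all three branches --- is the delicate part, and it is precisely where quasiconvexity (Lemma~\ref{strict}, Corollary~\ref{constant}) is indispensable.

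For the complexity: building $\tilde V$ costs $(nm)^{O(1)}$ by Theorem~\ref{rounding} and Corollary~\ref{rationalapprox}; computing the $A$-orthogonal basis, $B$, and $\tau$ by Gram--Schmidt costs $(nr)^{O(1)}$, and each $\y_{\v}$ has bit size $(nr)^{O(1)}$. For each of the $m$ test points, each of the $O(s)$ polynomials, and each of the $O(d)$ samples in the degenerate branch, one evaluates a polynomial and its gradient at a rational input of size $(nr)^{O(1)}$ at cost $(l\,n\,r\,m\,M)^{O(1)}$ by the sparse-evaluation bound; the product gives the stated time-complexity $s(l\,n\,r\,m\,M)^{O(1)}$. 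The only output is the vector $\c=\nabla F_{i}(\y)\in\Q^{n}$ (or a trivial one), whose entries are values of a degree-$d$, at most $M$-monomial, $l$-bit-coefficient polynomial at a point of size $(nr)^{O(1)}$, hence of bit size $(l+r)(dn)^{O(1)}$, which is the output-complexity.
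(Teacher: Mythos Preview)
Your overall plan is close to the paper's, and the rounding case (all test points feasible) as well as the easy cut case ($\nabla F_i(\y)\neq\0$) are handled correctly. The gap is in your degenerate branch, and you correctly flag it as ``the delicate part'' but do not actually resolve it. Your trichotomy is \emph{not} exhaustive as stated: the negation of (a) ``some sample has $F_i\ge0$ and $\nabla F_i\neq\0$'' together with the negation of (b) ``all samples have $\nabla F_i=\0$'' leaves open the case where some sample $p$ has $\nabla F_i(p)\neq\0$ but $F_i(p)<0$. This can occur if your $d$ sample points along a line through $\y$ happen to land on the side where the convex sublevel set $\{F_i<0\}$ sits; then $\c=\nabla F_i(p)$ does \emph{not} give a valid shallow cut (Heinz's Lemma~2.4 needs $F_i(p)\ge0$), and nothing in your argument produces one. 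Nor can you invoke Corollary~\ref{constant}, since some gradient is nonzero.

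The paper closes this gap by a case split you omitted: it first tests the \emph{center}. If $F(\a)<0$ (Case~2.1), the paper samples $d$ points along the ray from $\a$ through the infeasible test point $\tilde{\x}_{\bar\v}$, strictly \emph{beyond} $\tilde{\x}_{\bar\v}$; since $\a$ is on the feasible side and $\tilde{\x}_{\bar\v}$ is not, quasiconvexity forces all these further samples to satisfy $F\ge0$, and since the univariate restriction is nonconstant at least one has nonzero gradient. If $F(\a)\ge0$ (Case~2.2), the paper samples $d$ points on \emph{each} side of $\a$ along every $\b_i$; because $\{F<0\}$ restricted to a line is an interval not containing $\a$, one entire side has $F\ge0$, and one searches for a nonzero gradient only there. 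Your unified ``sample through $\y$'' scheme can be repaired the same way: sample $d$ points on \emph{both} sides of $\y$ along each $\b_k$, use quasiconvexity (the interval $\{t:F_i(\y+t\b_k)<0\}$ misses $t=0$, so lies entirely on one side) to identify an infeasible side, and look for a nonzero gradient only on that side; then (a) and (b) really are exhaustive. As written, the proposal is missing precisely this ingredient.
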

\begin{proof}
First compute an orthogonal basis $\{\b_1, \dots, \b_n\}$ according to $A$. Let $\sigma > 1$ (Heinz used $\sigma= 3/2$). Next construct a polytope approximating $S^{n-1}(1)$ according to Theorem \ref{rounding} using $m$ vertices and let $V\subset \Z^n$ denote the set of non-normalized vertices. 
For every $\v \in V $, define
$$
\b_{\v} := \sum_{i=1}^n \frac{v_i \b_i}{||\v||_2 ||\b_i||_A},
$$
$$
\x_{\v} := \a + \frac{1}{ (n+\sigma)} \b_{\v}.
$$
Since we cannot compute $\b_{\v}$ and $\x_{\v}$ exactly due to square roots from the norms, we will approximate the square roots. This can be done with any root finding technique using fixed point arithmetic, for example, the Newton-Raphson method has quadratic convergence and will find the desired approximation within polynomial time. For a reference on numerical methods, see~\cite{burden}. We note that 
$$
||\v||_2 ||\b_i||_A = \sqrt{\v^T \v} \sqrt{\b_i^T A \b_i} = \sqrt{ \v^T \v\b_i^T A \b_i}.
$$ 
Thus we will approximate the reciprocal of that square root within an accuracy of $\delta = \epsilon/(||A^{1/2}||_\infty ||\b_{\max}||_2 \sqrt{n})$ where $||A||_\infty$ is the maximum row sum of $A$ and $\b_{\max} = \mathrm{argmax}\{||\b_i||_2 : i=1, \dots, n\}$. 
Let $0 < \delta_{\v,i}< \delta $ be the exact error on each approximation. 
Let $\tilde{\b}_{\v} \in \mathbb{Q}^n$ and $\tilde{\x}_{\v}\in \mathbb{Q}^n$ denote the rational approximations of $\b_{\v}$ and $\x_{\v}$ respectively. 

Since $V$ is symmetric across all axes, it suffices to store only the vertices in the first orthant. This exponentially reduces the number of root approximations necessary.
\textit{\\ Case 1: ($\beta$-rounding)} Suppose $\tilde{\x}_{\v} \in Y$ for all $\v \in V$. We will show that $E(A, \a)$ is a $\beta$-rounding of $Y$.\\
 \\
First observe that 
\begin{align*}
\tau( \a + A\tilde{\b}_\v) &= B^T A\tilde{\b}_{\v}\\
 &= \sum_{i=1}^n \v_i \frac{\b_i^T A \b_i}{||\b_i||_A} \left(\frac{1}{||\v||_2 ||\b_i||_A} + \delta_{\v,i}\right)\\
 &= \sum_{i=1}^n \e_i \v_i (\frac{1}{||\v||_2 } + \delta_{\v,i} ||\b_i||_A),
\end{align*}
where $\e_i\in \R^n$ is the $i^{\text{th}}$ unit vector. Hence,  we have
\begin{align*}
\left|\left| \frac{\v}{ ||\v||_2} - \tau(\a + A\tilde{\b}_\v)\right|\right|_\infty &=\max_{i=1, \dots, n} \v_i \delta_{\v,i} ||\b_i||_A\\
&\leq \max_{i=1, \dots, n} \delta_{\v,i} ||\b_i||_A\\
&\leq \delta ||A^{1/2}||_\infty ||\b_{\max}||_2\\
&\leq \epsilon/\sqrt{n}.
\end{align*}
  Since  $||\cdot||_\infty \leq \epsilon/\sqrt{n}$ implies $||\cdot||_2 \leq \epsilon$, we see that $\{ B^T A\tilde{\b}_{\v} : \v \in V\}$ is an $\epsilon$-approximation of $V$ (after normalizing to the unit sphere). Therefore by Corollary~\ref{rationalapprox}, there exists a $\hat{\beta} = O(\sqrt{n/\log(m/n) })$ such that $\conv(K) \subset E(\frac{1}{\hat{\beta}^2} I, \0)$. Letting $\beta = \hat{\beta}(n+ \sigma)$, it follows that
$$
E(\tfrac{1}{\beta^2}A, \a) = \tau^{-1}(E(\tfrac{1}{\beta^2}I,\0)) \subset \conv(\{ \tilde{\x}_{\v} : \v \in V\}) \subset Y \subset E(A,\a).
$$ 
\indent \textit{Case 2: (Shallow cut)} Suppose that $\tilde{\x}_{\bar{\v}} \notin Y$ for some $\bar{\v} \in V$. We will show that there exists the desired hyperplane.\\
\indent For some $F \in \{F_0, \dots, F_{s}\}$, we know that $F(\tilde{\x}_{\bar{\v}}) \geq 0$.\\
\indent \textit{Case 2.1:} $F(\a) < 0$. We will find a point near $\tilde{\x}_{\bar{\v}}$ with non-zero gradient. \\
Pick scalars $\lambda_1, \dots, \lambda_d$, such that 
\begin{equation}
\frac{n+1}{n+\sigma} < \lambda_1 < \dots < \lambda_d < \frac{1}{||\tilde{\b}_{\bar{\v}}||_A}.
\end{equation}
\indent Since the inequalities $F(\a) < 0 $ and $F(\tilde{\x}_{\bar{\v}}) \geq 0$ are valid, the polynomial $F(\a + \frac{1}{n+1}\lambda \tilde{\b}_{\bar{\v}}) $ is of degree $d$ at most and not constant with respect to $\lambda$. Therefore, for some $1\leq k \leq d$, we may choose a point $\y \in \R^n$ satisfying
$$
\y = \a + \frac{1}{n+1}\lambda_k A \tilde{\b}_{\bar{\v}} \text{ and } \nabla F(\hat{\y}) \neq 0.
$$
Define $\c := \nabla F(\y)^T$. Note that $\y \notin Y$ since $\tilde{\x}_{\v}$ is a convex combination of $\a$ and $\y$. Thus
for all $\x \in Y$,
$$
\c^T \x \leq \c^T \y = \c^T\a + \frac{1}{n+1} \lambda_k \c^T A \tilde{\b}_{\bar{\v}} \leq \c^T\a + \frac{1}{n+1} \frac{\c^T A \tilde{\b}_{\bar{\v}}}{||\tilde{\b}_{\bar{\v}}||_A} \leq \c^T\a + \frac{1}{n+1} ||\c||_A.
$$
The last inequality comes from the Cauchy-Schwarz inequality for the scalar product generated by the matrix $A$.\\
\indent \textit{Case 2.2:} $F(\a) \geq 0$ (i.e., $\a \notin Y$). We can now completely ignore the fact that $\tilde{\x}_{\tilde{\v}} \notin Y$ and instead just find a unit direction point with non-zero gradient.  If $\nabla F (\a) \neq \0$, then we can simply use $\c = \nabla F(\a)$.  Otherwise, we may need to examine up to $nd$ points not in $Y$.  

For this reason, pick scalars $\lambda_{i1}, \dots, \lambda_{id}$ for each $i = 1, \dots, n$ such that 
$$
0 < \lambda_{i1} < \dots < \lambda_{id} < \frac{1}{||\b_i||_A}
$$
once again using the Newton-Raphson method to approximate the roots, but this time the same precision is not required. 

 Define the set
$$
C = \left\{ \a \pm\gamma \lambda_{ij} A \b_i \colon j = 1, \dots, n\right\},
$$
where $0 < \gamma \leq \frac{1}{n+1}$. By Lemma \ref{constant}, if $\nabla F(\y) = \0$ for all $\y \in C$, then $F$ is constant. If so, any point $\c\in \Q^n$ will suffice to output.\\

Otherwise, we will show that there exists a point in $C\setminus Y$ with non-zero gradient. 

We remark here that we only need to choose $\gamma = \frac{1}{n+1}$ to generate a shallow cut, but choosing a smaller $\gamma$ will allow for a deeper cut and therefore a faster convergence of the shallow cut ellipsoid method.  Unfortunately, this does not improve the theoretical complexity of the algorithm.  See \cite[chapter 4]{schriver88} for more about the complexity of various cuts.\\
For every $i = 1, \dots, n$, define the finite subsets 
\begin{align*}
C^+_i &:= \left\{ \a +\gamma\lambda_{ij} A \b_i \colon j = 1, \dots, n\right\},\\
C^-_{i} &:= \left\{ \a -\gamma \lambda_{ij} A \b_i \colon j = 1, \dots, n\right\}.
\end{align*}
For every $i=1, \dots, n$, since $\a$ is a convex combination of points in $C^+_i$ and $C^-_{i}$, at least one of the sets $C^+_i \cap Y$ or $C^-_{i}\cap Y$ is empty. 
Thus there exists a point $\y \in C$ such that, similar to Case 2.1, 
$$
F(\y) \geq 0 \text{ and } \nabla F(\y) \neq \0.
$$ 
Define $\c = \nabla F(\y)$. The remaining calculation is similar to the one above.
\end{proof}
\begin{figure}
\begin{center}
\ifpdf
\input{case2RoundingProofpdf.tex}
\else
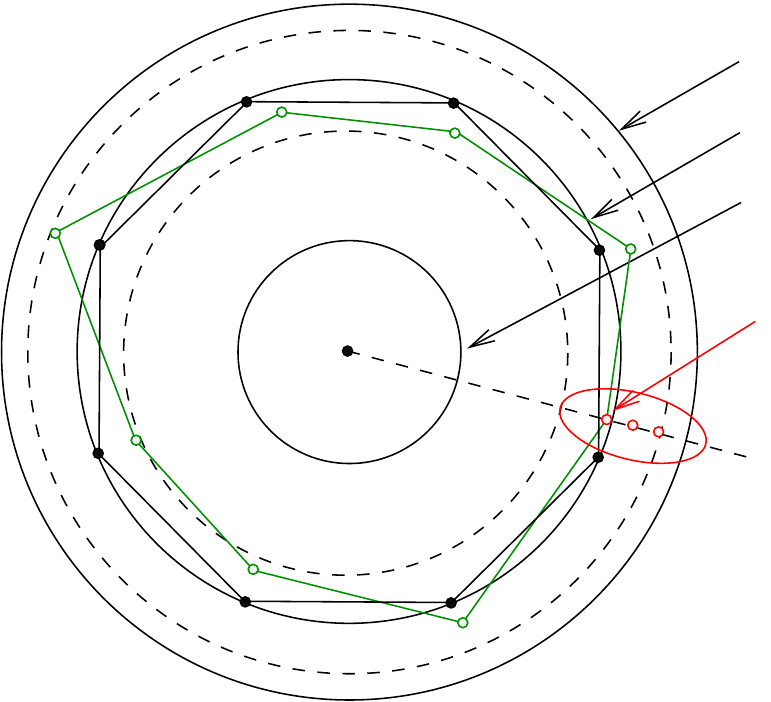
\fi\\
\end{center}
\caption{Case 2.1: If $F(\a) < 0$ and $F(\x_{\bar{\v}}) \geq 0$, then $F$ restricted to the line through those two points is not constant. One of the test points on that line must have a non-zero gradient and not lie within $Y$.}
\end{figure}
\begin{figure}
\begin{center}
\ifpdf
\input{case22RoundingProofpdf.tex}
\else
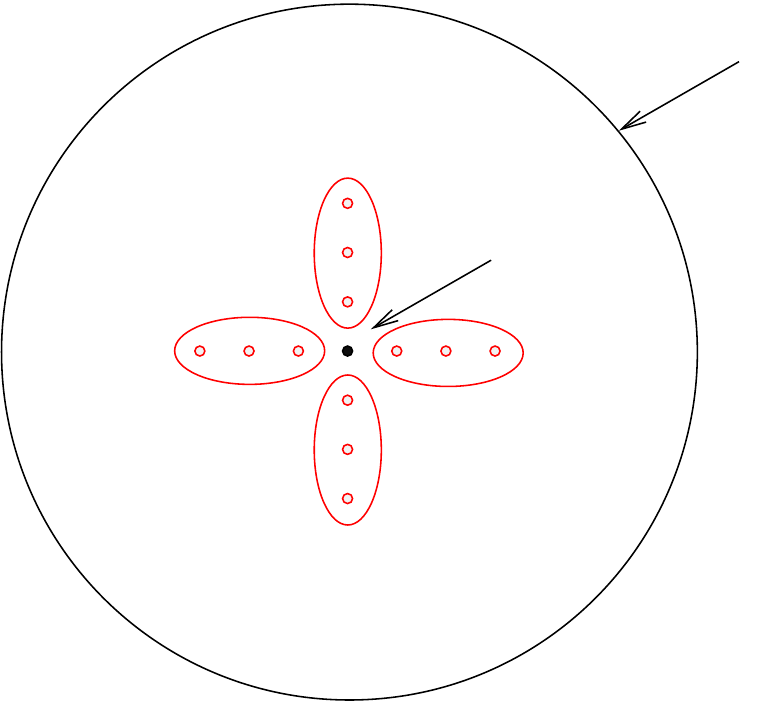
\fi\\
\end{center}
\caption{Case 2.2: If $F(\a) \geq 0$ we search for a point near $\a$ in a unit direction that has gradient non-zero and is not in $Y$.}
\end{figure}

This proof differs from Heinz's in three ways.  Most obviously, we generalize the test points for a rounding based on any chosen $m\geq 2n$.  Secondly, we generalize Heinz's cuts to show that any point $\y$ with $\tau(\y) \in B(\frac{1}{n+1},\0)$ and $\nabla F(\y) \neq \0$ generates a shallow cut.  Lastly, in case 2.2, Heinz's proof requires $\lambda_{ij}$ bounded from below as the previous $\lambda_{i}$ coefficients were.  As we show, that is not necessary, and is even disadvantageous.  Allowing a different $\gamma$ for a deeper cut is a new idea.  

\begin{corollary}
\label{shallowCorollary}
Let $\hat{c} > 1$ and let $m\colon \N \to \N$, such that $2n \leq m(n) \leq \hat{c}^n$. Then there exists a function $\beta\colon \N \to \R$ where $\beta(n) = O(\frac{n^{3/2}}{\log(m/n)})$ and an algorithm with the following input:\\
\indent $\mathrm{(I_1)}$ sparsely encoded quasiconvex polynomials $F_0, \dots, F_{s} \in \Z[\x]$ of total degree $d$ and of at most $M$ monomials,\\
\indent $\mathrm{(I_2)}$ an integer $R$ and a positive definite matrix $A_0 \in \Z^{n\times n}$ and define $F_{s+1}(\x) := -R + \x^T A_0 \x$. Let $l$ be a bound on the maximum binary encoding length of the coefficients of $F_0, \dots , F_{s+1}$,\\
\indent $\mathrm{(I_3)}$ a positive number $\epsilon \in \mathbb{Q},$\\
which outputs a positive definite matrix $A \in \mathbb{Q}^{n\times n}$ and a point $\a\in \mathbb{Q}^n$ such that one of the following holds:
\begin{enumerate}[\quad\rm 1.]
\setlength{\itemsep}{0pt}
\item $Y \subseteq E(A,\a)$ and $\vol(E(A,\a)) \leq \epsilon$, or
\item the ellipsoid $E(A, \a)$ is a $\beta$-rounding of $Y$.
\end{enumerate}This algorithm runs in time-complexity $s (l n m d M \langle \epsilon \rangle)^{O(1)}$ 
and with \\
output-complexity $(l + \langle \epsilon \rangle) (d n)^{O(1)}$. 
\end{corollary}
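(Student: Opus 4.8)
The plan is to feed the shallow cut separation oracle of Theorem~\ref{shallow} into the Shallow Cut Ellipsoid Method of~\cite{schriver88}, so that the work reduces to checking that method's hypotheses and to tracking encoding sizes. First I would exhibit an $R' \in \Z_+$ with $Y \subseteq B(R',\0)$: on $Y$ the inequality $F_{s+1}(\x) = -R + \x^T A_0\x < 0$ holds, and since $A_0 \in \Z^{n\times n}$ is positive definite we have $\det A_0 \geq 1$ while every eigenvalue is at most $\operatorname{tr}(A_0) \leq n 2^l$; hence $\lambda_{\min}(A_0) \geq (n2^l)^{-(n-1)}$, so every $\x \in Y$ satisfies $||\x||_2^2 \leq \x^T A_0 \x / \lambda_{\min}(A_0) < R\,(n2^l)^{n-1}$. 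Taking $R' := \lceil \sqrt{R\,(n2^l)^{n-1}}\,\rceil$ gives $Y \subseteq B(R',\0)$ with $\langle R'\rangle \le (ln)^{O(1)}$, computable in polynomial time.

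I would then run the Shallow Cut Ellipsoid Method on $\overline Y$ (which is circumscribed by the same ball, lies in exactly the same closed half-spaces as $Y$, and has volume $\vol(Y)$) with this $R'$ and the given $\epsilon$, using Theorem~\ref{shallow} as the shallow cut separation oracle: each query hands an ellipsoid $E(A,\a)$ to that oracle and receives either a confirmation that $E(A,\a)$ is a $\beta$-rounding of $Y$, in which case the algorithm halts and returns $(A,\a)$, giving conclusion~2; or a vector $\c\neq\0$ with $Y\subseteq\{\x:\c^T\x\le\c^T\a+\tfrac1{n+1}||\c||_A\}$, which is exactly the shallow cut that the ellipsoid method consumes. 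When it is not stopped early, the ellipsoid method terminates with $(A,\a)$ satisfying $Y\subseteq E(A,\a)$ and $\vol(E(A,\a))\le\epsilon$, giving conclusion~1. The rounding radius $\beta(n) = O(n^{3/2}/\log(m/n))$ is inherited verbatim from Theorem~\ref{shallow}.

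For the complexity, the Shallow Cut Ellipsoid Method makes a number of oracle calls and arithmetic operations polynomial in $n + \langle R'\rangle + \langle\epsilon\rangle$, and its standard analysis~\cite{schriver88} bounds the binary encoding length $r$ of every intermediate ellipsoid $E(A,\a)$ by $(n + \langle R'\rangle + \langle\epsilon\rangle)^{O(1)} = (ln + \langle\epsilon\rangle)^{O(1)}$. Each call to the oracle of Theorem~\ref{shallow} then costs $s(lnrmdM)^{O(1)}$ (the factor $d$ accounting for the $d$ auxiliary scalars and the degree-$d$ polynomial evaluations inside that oracle), which after substituting the bound on $r$ and multiplying by the polynomial number of calls yields the claimed time-complexity $s(lnmdM\langle\epsilon\rangle)^{O(1)}$. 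The returned $(A,\a)$ has encoding length at most this same $r$-bound, giving output-complexity $(l+\langle\epsilon\rangle)(dn)^{O(1)}$.

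The step I expect to be the main obstacle is the last one: one must verify that the encoding size $r$ passed to Theorem~\ref{shallow} does not grow beyond a fixed polynomial across the polynomially many iterations of the ellipsoid method, and that the fixed-point square-root approximations performed inside the oracle --- whose target accuracy $\delta$ depends on $A$, hence on $r$ --- preserve this bound, so that the oracle really does run in the stated time on every call. Everything else is a routine composition of the two previously established black boxes.
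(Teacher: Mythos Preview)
Your proposal is correct and follows essentially the same approach as the paper, which simply defers to \cite[Corollary 3.4]{heinz05}: plug the shallow cut separation oracle of Theorem~\ref{shallow} into the Shallow Cut Ellipsoid Method, after first extracting a polynomial-size outer radius from the quadratic constraint $F_{s+1}$. Your explicit eigenvalue bound for $A_0$ and the tracking of the intermediate ellipsoid encoding size $r$ are exactly the details one has to supply, and the ``main obstacle'' you flag is handled by the standard Gr\"otschel--Lov\'asz--Schrijver analysis of the ellipsoid method, so there is no genuine gap.
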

\begin{proof}
The proof is similar to~\cite[Corollary 3.4]{heinz05}.
\end{proof}
\section{Lenstra-type Algorithm}
Now that we have a shallow-cut separation oracle, our Lenstra-type algorithm is almost ready to be applied to quasiconvex integer minimization.  Before arriving at the proof of main theorem, we state the following lemmas related to the convergence of the shallow-cut ellipsoid method and conclude that we can solve the feasibility problem.
\begin{lemma}[Lemma 4.1 in \cite{heinz05}]
\label{minvolume}
Let $F_0, \dots, F_{s+1} \in \Z[\x]$ be polynomials, $R \in \Z$ an integer, $A_0 \in \Z^{n \times n}$ a positive definite matrix, and 
$F_{s+1} \in \Z[\x]$ a polynomial defined by $F_{s+1}(\x) := -R + \x^T A_0 \x$ for $\x \in \R^n$. 
Moreover, let the binary encoding length of the coefficients be bounded above by $l$, let $d$ be an upper bound for the degree of the polynomials, and let the set $Y$ contain an integer point $\hat{\x} \in \Z^n$. 
Then there is a positive rational number $\epsilon \in \mathbb{Q}$ which bounds the volume 
$0 < \epsilon < \vol(Y)$ such that its binary length $\langle \epsilon\rangle $ is in the class $l (d n)^{O(1)}$.
\end{lemma}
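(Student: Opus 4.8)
The plan is to exhibit an explicit Euclidean ball around $\hat{\x}$ that lies inside $Y$ and whose radius is bounded below by $2^{-l(dn)^{O(1)}}$; then $\epsilon$ can be taken to be a rational just below the volume of that ball. The observation that makes this work is that integrality supplies a definite ``gap'': since every $F_i$ has integer coefficients and $\hat{\x}\in\Z^n$, the value $F_i(\hat{\x})$ is an integer, so $F_i(\hat{\x})<0$ forces $F_i(\hat{\x})\le -1$ for all $i=0,\dots,s+1$ (the ellipsoidal constraint $F_{s+1}$ included). A small enough perturbation of $\hat{\x}$ cannot close a gap of size $1$, provided we can control how fast each $F_i$ varies near $\hat{\x}$.

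To make this quantitative I would first bound the region. Since $\hat{\x}\in Y$ we have $\hat{\x}^TA_0\hat{\x}<R$, and because $A_0$ is an integral positive definite matrix, $\det A_0\ge 1$ and $\lambda_{\max}(A_0)\le\operatorname{tr}A_0\le n2^l$, whence $\lambda_{\min}(A_0)\ge\det(A_0)/\lambda_{\max}(A_0)^{n-1}\ge(n2^l)^{-(n-1)}$. Therefore every point of $Y$, and in particular $\hat{\x}$, lies in the ball $B(\Psi,\0)$ with $\Psi:=\sqrt{R/\lambda_{\min}(A_0)}+1\le 2^{l(dn)^{O(1)}}$ (using $|R|\le 2^l$). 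On $B(\Psi,\0)$ I would bound the gradients monomial by monomial: a monomial $c_\alpha\x^\alpha$ with $|c_\alpha|\le 2^l$, $|\alpha|\le d$ contributes at most $d\,2^l\Psi^{d}$ to each partial derivative, and summing over the at most $M$ (or at most $\binom{n+d}{d}$) monomials gives $\|\nabla F_i(\x)\|\le G$ on $B(\Psi,\0)$ for all $i$, with $G\le 2^{l(dn)^{O(1)}}$. Set $\delta:=1/(2G)$.

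Now, for $\|\x-\hat{\x}\|\le\delta\le 1$ the segment $[\hat{\x},\x]$ stays in $B(\Psi,\0)$, so the fundamental theorem of calculus gives $|F_i(\x)-F_i(\hat{\x})|\le\delta G=\tfrac12$, hence $F_i(\x)\le-1+\tfrac12<0$ for every $i$; thus $B(\hat{\x},\delta)\subseteq Y$. This ball contains the axis‑aligned cube of side $\delta/n$ centered at $\hat{\x}$, so $\vol(Y)\ge(\delta/n)^n=:v$, a positive rational with $\langle v\rangle = n\log_2(n/\delta)=l(dn)^{O(1)}$. Taking $\epsilon$ to be the largest power of $\tfrac12$ strictly below $v$ yields $0<\epsilon<v\le\vol(Y)$ with $\langle\epsilon\rangle=l(dn)^{O(1)}$, as claimed. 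The only real obstacle is bookkeeping: every estimate must be kept within the class $l(dn)^{O(1)}$, the touchiest point being that raising $\delta$ to the $n$-th power (and the cube factor $n^{-n}$) multiplies the exponent $\log_2(1/\delta)=l(dn)^{O(1)}$ by $n$, which is still absorbed by $(dn)^{O(1)}$ but must be tracked. Note that quasiconvexity plays no role here — full dimensionality of $Y$ near $\hat{\x}$ is a consequence solely of the strict inequalities together with integrality — so the statement holds for arbitrary integer polynomials; alternatively one may simply invoke \cite[Lemma 4.1]{heinz05}, since nothing in that argument depends on the sparse‑encoding refinement.
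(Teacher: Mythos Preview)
The paper does not give its own proof of this lemma; it simply quotes it verbatim from Heinz~\cite[Lemma~4.1]{heinz05}. Your proposal is a correct self-contained reconstruction of the standard argument: the integrality gap $F_i(\hat{\x})\le -1$, a crude Lipschitz bound for polynomials on the ellipsoidal region (using $\lambda_{\min}(A_0)\ge\det(A_0)/\lambda_{\max}(A_0)^{n-1}\ge (n2^l)^{-(n-1)}$ to control $\|\hat{\x}\|$), and then a ball of radius $\delta=1/(2G)$ inside $Y$. All of the size estimates land in $l(dn)^{O(1)}$ as you track them, including the $n$-th power in the volume step. Two cosmetic remarks: the paper's convention is $B(\text{radius},\text{center})$, so your $B(\hat{\x},\delta)$ and $B(\Psi,0)$ reverse the order; and to guarantee $\delta\le 1$ you implicitly need $G\ge\tfrac12$, which does hold because the quadratic $F_{s+1}$ already forces the monomial bound to exceed $1$ (or one simply replaces $\delta$ by $\min(\delta,1)$). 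Your final observation that quasiconvexity is irrelevant here is also correct.
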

\begin{lemma}[p.27 in \cite{bank97}]
\label{R}
If a minimum point exists to problem (\ref{minimization}), then there exists a ball of radius $R_* \in \Z$ containing such a point, where the binary length of $R_*$ is $l d^{O(n)}$.
\end{lemma}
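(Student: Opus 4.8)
The plan is to convert the statement into a bound on the size of a single lattice point of a convex semialgebraic set and then to isolate the genuinely hard ingredient, which is supplied by~\cite{bank97}.

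First I would replace the strict inequalities by weak ones, as in the introduction: for $\x \in \Z^n$, $F_i(\x) < 0$ is equivalent to $G_i(\x) := F_i(\x) + 1 \le 0$, and each $G_i$ is again quasiconvex of degree at most $d$ with coefficients of binary length $O(l)$. Assume a minimizer exists, with optimal value $z^* \in \Z$. Then
$$
P := \bigl\{\,\x \in \Z^n : G_i(\x) \le 0 \text{ for all } i,\ \hat F(\x) \le z^*\,\bigr\}
$$
is nonempty, and by quasiconvexity of $\hat F$ and the $G_i$ it is exactly the set of lattice points of the closed convex semialgebraic set $C := \{\x \in \R^n : G_i(\x) \le 0\ \forall i,\ \hat F(\x) \le z^*\}$. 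So it suffices to produce a lattice point of $C$ of norm at most some rational $\rho$ with $\langle \rho\rangle = l d^{O(n)}$, and then take $R_* := \lceil \rho \rceil$.

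The core of the argument is to bound $z^*$ and at the same time exhibit an optimal lattice point of the claimed size; a bound on $z^*$ cannot be had ``for free'', since any a priori bound on $z^*$ already presupposes a bound on some optimal $\x^*$. Once such a bound is in hand the rest is routine: $C$ is then described by polynomials of degree at most $d$ with coefficient length $l d^{O(n)}$, hence lies in a ball of radius $2^{l d^{O(n)}}$ about the origin by the standard effective estimates of real algebraic geometry (Renegar; Basu--Pollack--Roy), so any point of $P \subseteq C$ has the required size. To obtain the bound on $z^*$ one uses that the integer minimum is \emph{attained}: quasiconvexity lets one push an optimal point onto the relevant boundary, and the obstruction that the recession cone of the real relaxation may contain only irrational directions --- a thin slab around an irrational hyperplane is the model example --- is exactly why a quantitative Diophantine-approximation argument is required in place of a unimodular projection. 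This last step is the estimate on p.~27 of~\cite{bank97}, which I would invoke directly; reproving it would amount to reconstructing the transfer and effective-bound machinery developed there. The interlocking of integrality, unboundedness, and the a-priori-unknown optimal value is the main obstacle.
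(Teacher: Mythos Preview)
The paper does not supply a proof of this lemma at all: it is stated purely as a citation of the result on p.~27 of \cite{bank97}. So there is no ``paper's own proof'' to compare against; the paper's approach is simply to invoke \cite{bank97} directly.

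Your sketch, by contrast, tries to factor the statement into two steps --- first bound $z^*$, then bound an optimal lattice point --- and this decomposition has a genuine gap. The claim that ``$C$ is then described by polynomials of degree at most $d$ with coefficient length $l d^{O(n)}$, hence lies in a ball of radius $2^{l d^{O(n)}}$'' is false as stated: even with $z^*$ fixed, $C$ need not be bounded (take $\hat F(\x)=x_1^2$ with no constraints; then $C=\{\x:x_1^2\le z^*\}$ is an unbounded slab). The effective bounds of Renegar and Basu--Pollack--Roy you cite assert only that a nonempty semialgebraic set contains a \emph{real} point of bounded norm, not that the set itself is bounded, and certainly not that it contains a \emph{lattice} point of bounded norm. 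So from a bound on $z^*$ you cannot conclude that ``any point of $P\subseteq C$ has the required size.''

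In fact the result of \cite{bank97} already delivers the bound on an optimal \emph{integer} point directly; it is not merely a bound on $z^*$ to be fed into a second semialgebraic step. Your write-up correctly identifies the real difficulty (irrational recession directions and the need for a Diophantine argument), but misattributes what the cited estimate actually supplies and inserts an intermediate step that does not go through. The cleanest fix is to do exactly what the paper does: state the lemma and cite \cite{bank97}.
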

\noindent The following theorem solves the feasibility problem.
\begin{theorem} 
\label{feasible}
Let $F_0, \dots, F_s \in \Z[\x]$ be quasiconvex polynomials, $R >0$ an integer, $A_0 \in \Z^{n\times n}$ a positive definite matrix, and 
$F_{s+1} \in \Z[\x]$ a polynomial defined by $F_{s+1}(\x) = -R + \x^T A_0 \x$ for $\x \in \R^n$. 
Let $d$ be an upper bound for the degree of the polynomials $F_0, \dots, F_{s+1}$, presented as a sparse list of monomials with at most $M$ monomials, and let the binary length of the coefficients be bounded by $l$. Moreover, consider the set
\begin{equation}
Y := \left\{ \x \in \R^n : F_i(\x) < 0 , i = 0,1,\dots,s+1 \right\}
\end{equation}
Then there is an algorithm with 
time-complexity $s (d M l)^{O(1)} 2^{2n\log_2(n) + O(n)} $
which computes a point $\x^* \in Y \cap \Z^n$ or confirms that no such point exists. 
\end{theorem}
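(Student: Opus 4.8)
The plan is to take the generic modern Lenstra-type algorithm of Section~4 and run it on the class $\mathcal{C}$ of convex sets $\{\x\in\R^n : F_i(\x)<0,\ i=0,\dots,s+1\}$, where $F_0,\dots,F_s$ are quasiconvex and $F_{s+1}(\x)=-R+\x^TA_0\x$ is the ellipsoidal bound, and then to account for the running time. First I would verify the standing hypothesis of that algorithm, namely that $\mathcal{C}$ is closed under projection into affine hyperplanes: this is exactly Remark~\ref{polyreduction}, which also records that such a projection leaves the degree bound $d$, the number $s$ of quasiconvex inequalities, quasiconvexity, and the ellipsoidal form of the last constraint intact, dropping only the dimension. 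The one subtlety to keep in mind throughout is that we never rewrite the polynomials in the reduced coordinates --- doing so would destroy sparsity and could blow a single monomial up into $d^{\Omega(\min(n,d))}$ of them --- but instead follow Remark~\ref{injection}, holding $F_0,\dots,F_{s+1}$ fixed while carrying the accumulated coordinate-transformation matrix $C$; each polynomial and gradient is then evaluated by forming the original-space point $\x$ from the reduced point via $C$ and applying the chain rule, at cost $(lMdn)^{O(1)}$ times a factor polynomial in the bit size of $C$.

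With the class and the representation fixed, the five steps of the algorithm are filled in as follows. \textbf{Bounds:} the circumscribing ball radius is read off from $\x^TA_0\x<R$ at bit size $O(l)$, and the volume lower bound $\epsilon$ with $\langle\epsilon\rangle=l(dn)^{O(1)}$ comes from Lemma~\ref{minvolume}. \textbf{Ellipsoid rounding:} apply Corollary~\ref{shallowCorollary} with the choice $m:=n2^n$ of test points; since $\log(m/n)=\Theta(n)$, its shallow-cut oracle (Theorem~\ref{shallow}) either certifies $\vol(E(A,\a))\le\epsilon$, so that the current subproblem has no integer point by Lemma~\ref{minvolume}, or returns a $\beta$-rounding with the optimal radius $\beta(n)=O(n)$. \textbf{Flatness direction:} by Remark~\ref{flatdir} a flatness direction $\d$ of $E(\tfrac1{\beta^2}A,\a)$ is obtained from an SVP in $\mathcal{L}((A^{1/2})^T)$ via Theorem~\ref{SVP} in $n^{O(1)}2^{2n}$ time; with $\omega(n)=n$ for ellipsoids by Corollary~\ref{flatness}, either $w(E(\tfrac1{\beta^2}A,\a))>n$, in which case the inner ellipsoid --- hence $Y$ --- contains an integer point that we recover by a single CVP call on the dual lattice (Theorem~\ref{SVP}), or $w(E(A,\a))\le n\beta$ by Lemma~\ref{betaflat}. \textbf{Sublattice} and \textbf{Project:} replacing $\d$ by its primitive part, Lemma~\ref{basis} extends it to a basis of the integer lattice $\Z^n$ in $n^{O(1)}$ time, and we recurse on the at most $2\omega(n)\beta(n)+3=O(n^2)$ hyperplane subproblems $Y_t$, each again a member of $\mathcal{C}$.

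For correctness, every point the algorithm returns lies either in an inner ellipsoid contained in $Y$ or is the unique point of a $0$-dimensional leaf, hence in $Y\cap\Z^n$; conversely, if $Y\cap\Z^n\ne\emptyset$ the recursion cannot end with a false negative, because at each node we either detect a point directly through the flatness/CVP branch, or Lemma~\ref{minvolume} excludes the small-volume alternative, or Khinchin's flatness theorem forces every integer point of $Y\subseteq E(A,\a)$ onto one of the enumerated hyperplanes $\d^T\x=t$. For the running time, the recursion tree has at most $\prod_{i=1}^n(2\omega(i)\beta(i)+3)=\prod_{i=1}^n O(i^2)=2^{2n\log_2 n+O(n)}$ leaves, while the work at each node is the shallow-cut ellipsoid method --- oracle-polynomially many calls, each evaluating $m=n2^n$ test points --- together with the $n^{O(1)}2^{2n}$ SVP/CVP solves and the $n^{O(1)}$ sublattice step, giving a per-node cost of $s(lMd)^{O(1)}2^{O(n)}$; multiplying and absorbing the $2^{O(n)}$ factors yields $s(lMd)^{O(1)}2^{2n\log_2 n+O(n)}$.

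I expect the crux to be the last computation rather than any individual step: one must show that as the dimension descends the bit sizes of the successive ellipsoids $A,\a$, of the bounds $R$ and $\epsilon$, and of the accumulated transformation matrices $C$ all stay bounded by a fixed polynomial in the original parameters $s,l,M,n,d$ --- closing the apparent feedback loop in which the ellipsoid method's output size feeds into the oracle's input size through $C$ --- and, in particular, that the total degree $d$ continues to enter only polynomially, which is precisely what the non-expansion scheme of Remark~\ref{injection} secures. The rest is an assembly of results already established.
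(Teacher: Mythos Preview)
Your proposal is correct and follows essentially the same approach as the paper: the paper's own proof is a one-line invocation of the Section~4 complexity template together with Lemma~\ref{minvolume}, Lemma~\ref{R}, and Corollary~\ref{shallowCorollary}, and you have carefully unpacked exactly those ingredients step by step, including the closure-under-projection verification (Remark~\ref{polyreduction}), the sparse-evaluation scheme (Remark~\ref{injection}), the choice $m=n2^n$ for the $O(n)$-rounding, the SVP/CVP calls, and the subcase count. Your explicit attention to the bit-size feedback loop across recursive levels is a point the paper leaves implicit, so if anything your write-up is more complete than the original.
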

\begin{proof}  This is a direct result of the complexity analysis done in section 4 in conjunction with Lemma \ref{minvolume}, Lemma \ref{R}, and Corollary \ref{shallowCorollary}.
\end{proof}
From the approach of this method, due to the recursion, it is unlikely possible to obtain a better time-complexity than $2^{O(n\log n)}$ in terms of the dimension. An open problem is then to find an algorithm that is single exponential in the dimension.\\ 
We now provide an outline for the proof of Theorem \ref{main}, which follows from the same reasoning as~\cite[Theorem 5.1]{heinz05}.
\begin{proof}[Proof of Theorem~\ref{main}]
Again, from \cite[p. 27]{bank97} that if a minimum point $\x^*$ exists, then there exists a ball of radius $R_* \in \Z$ containing such a point, where the binary length of $R_*$ is $r = l d^{O(n)}$.  Therefore, $F(\x^*)$ has binary length bounded by $(lrdMn)^{O(1)} = (lMn)^{O(1)} d^{O(n)}.$  We define the polynomial $F_{s+1} \in \Z[\x]$ by $F_{s+1}(\x) \colon = -R_*^2 + \x^T \x$ for $\x \in \R^n$, making the problem bounded. To solve the optimization problem, we compute the smallest integer $z^* \in \Z$ such that 
$$
\left\{ \x \in \Z^n \colon F_0(\x) - z^* <0, \text{ and } F_i(\x) < 0 \text{ for } i = 1, \dots, s+1\right\} \neq \emptyset.
$$
We then apply binary search on $z^*$ within binary length of $(lMn)^{O(1)}d^{O(n)}$, testing for integer points using Theorem \ref{feasible}, to find an optimal $z^*$ and obtain the desired time-complexity because of the bound given by \cite{bank97}.
\end{proof}

\section*{Acknowledgments}
The first author was supported by grant DMS-0636297, and both authors
were supported by grant DMS-0914873 of the National Science
Foundation.
\bibliography{../bib/research}{}

\begin{thebibliography}{10}

\bibitem{ajtai01}
M.~Ajtai, R.~Kumar, and D.~Sivakumar.
\newblock A sieve algorithm for the shortest lattice vector problem.
\newblock In {\em STOC '01: Proceedings of the Thirty-third Annual ACM
  Symposium on Theory of Computing}, pages 601--610, New York, NY, USA, 2001.
  ACM.

\bibitem{anstre99}
K.~M. Anstreicher.
\newblock Ellipsoidal approximations of convex sets based on the volumetric
  barrier.
\newblock {\em Mathematics of Operations Research}, 24(1):193--203, 1999.

\bibitem{anstre02}
K.~M. Anstreicher.
\newblock Improved complexity for maximum volume inscribed ellipsoids.
\newblock {\em SIAM Journal on Optimization}, 13(2):309--320, 2002.

\bibitem{bana93}
W.~Banaszczyk.
\newblock New bounds in some transference theorems in the geometry of numbers.
\newblock {\em Mathematische Annalen}, 296:625--635, 1993.

\bibitem{bana99}
W.~Banaszczyk, A.~E. Litvak, A.~Pajor, and S.~J. Szarek.
\newblock The flatness theorem for nonsymmetric convex bodies via the local
  theory of {B}anach spaces.
\newblock {\em Mathematics of Operations Research}, 24(3):728--750, 1999.

\bibitem{bank97}
B.~Bank.
\newblock Optimization and real equation solving.
\newblock In {\em II Escuela de Matem\'atica Aplicada (25 al 29 de agosto de
  1997): Notas de los Cursos}. Universidad de Buenos Aires, 1997.

\bibitem{bank88}
B.~Bank and R.~Mandel.
\newblock {\em Parametric Integer Optimization}.
\newblock Akademie-Verlag, Berlin, 1988.

\bibitem{bert}
D.~Bertsimas and R.~Weismantel.
\newblock {\em Optimization over Integers.}
\newblock Dynamic Ideas, Belmont, MA, May 2005.

\bibitem{boyd2004}
S.~Boyd and L.~Vandenberghe.
\newblock {\em Convex Optimization}.
\newblock Cambridge University Press, New York, NY, USA, 2004.

\bibitem{burden}
R.~L. Burden and J.~D. Faires.
\newblock {\em Numerical Analysis}.
\newblock Thomson Brooks/Cole, 8th edition, 2005.

\bibitem{dinur03}
I.~Dinur, G.~Kindler, R.~Raz, and S.~Safra.
\newblock {Approximating CVP to within almost-polynomial factors is NP-Hard}.
\newblock {\em Combinatorica}, 23:205--243, April 2003.

\bibitem{50years}
F.~Eisenbrand.
\newblock Integer programming and algorithmic geometry of numbers.
\newblock In M.~J{\"u}nger, T.~Liebling, D.~Naddef, W.~Pulleyblank, G.~Reinelt,
  G.~Rinaldi, and L.~Wolsey, editors, {\em 50 Years of Integer Programming
  1958--2008}. Springer-Verlag, 2010.

\bibitem{schriver88}
M.~Gr{\"o}tschel, L.~Lov{\'a}sz, and A.~Schrijver.
\newblock {\em Geometric Algorithms and Combinatorial Optimization}.
\newblock Springer-Verlag Berlin Heidelberg, 1988.

\bibitem{hanrot07}
G.~Hanrot and D.~Stehl\'{e}.
\newblock {Improved analysis of Kannan's shortest lattice vector algorithm}.
\newblock pages 170--186, 2007.

\bibitem{heinz05}
S.~Heinz.
\newblock Complexity of integer quasiconvex polynomial optimization.
\newblock {\em Journal of Complexity}, 21(4):543--556, 2005.

\bibitem{helfrich85}
B.~Helfrich.
\newblock {Algorithms to construct Minkowski reduced and Hermite reduced
  lattice bases}.
\newblock {\em Theor. Comput. Sci.}, 41:125--139, December 1985.

\bibitem{henk97}
M.~Henk.
\newblock Note on shortest and nearest lattice vectors.
\newblock {\em Information Processing Letters}, 61:183--188, 1997.

\bibitem{john48}
F.~John.
\newblock Extremum problems with inequalities as subsidiary conditions.
\newblock {\em Studies and Essays}, Courant Anniversary Volume:187--204, 1948.

\bibitem{kannan83}
R.~Kannan.
\newblock Improved algorithms for integer programming and related lattice
  problems.
\newblock In {\em STOC '83: Proceedings of the Fifteenth Annual ACM Symposium
  on Theory of Computing}, pages 193--206, New York, NY, USA, 1983. ACM.

\bibitem{kannaninf}
R.~Kannan.
\newblock Minkowski's convex body theorem and integer programming.
\newblock {\em Mathematics of Operations Research}, 12(3):415--440, 1987.

\bibitem{kannan79}
R.~Kannan and A.~Bachem.
\newblock {Polynomial algorithms for computing the Smith and Hermite normal
  forms of an integer matrix}.
\newblock {\em SIAM Journal on Computing}, 8(4):499--507, 1979.

\bibitem{khach96}
L.~Khachiyan.
\newblock Rounding of polytopes in the real number model of computation.
\newblock {\em Mathematics of Operations Research}, 21(2):307--320, 1996.

\bibitem{khach2000}
L.~Khachiyan and L.~Porkolab.
\newblock Integer optimization on convex semialgebraic sets.
\newblock {\em Discrete {\&} Computational Geometry}, 23(2):207--224, 2000.

\bibitem{khin48}
A.~Khinchin.
\newblock {A quantitative formulation of Kronecker's theory of approximation
  (in russian)}.
\newblock {\em Izvestiya Akademii Nauk SSR Seriya Matematika}, 12:113--122,
  1948.

\bibitem{kochol94}
M.~Kochol.
\newblock Constructive approximation of a ball by polytopes.
\newblock {\em Mathematica Slovaca}, 44(1):99--105, 1994.

\bibitem{kochol04}
M.~Kochol.
\newblock A note on approximation of a ball by polytopes.
\newblock {\em Discrete Optimization}, 1(2):229--231, 2004.

\bibitem{kumar05}
P.~Kumar and E.~A. Y{\i}ld{\i}r{\i}m.
\newblock Minimum-volume enclosing ellipsoids and core sets.
\newblock {\em Journal of Optimization Theory and Applications}, 126:1--21,
  2005.

\bibitem{lenstra83}
H.~W. Lenstra, Jr.
\newblock Integer programming with a fixed number of variables.
\newblock {\em Mathematics of Operations Research}, 8:538--548, 1983.

\bibitem{miccisvp01}
D.~Micciancio.
\newblock The shortest vector problem is {NP}-hard to approximate to within
  some constant.
\newblock {\em SIAM Journal on Computing}, 30(6):2008--2035, Mar. 2001.
\newblock Preliminary version in FOCS 1998.

\bibitem{micciancio10}
D.~Micciancio and P.~Voulgaris.
\newblock A deterministic single exponential time algorithm for most lattice
  problems based on voronoi cell computations.
\newblock pages 351--358, 2010.

\bibitem{micci092}
D.~Micciancio and P.~Voulgaris.
\newblock Faster exponential time algorithms for the shortest vector problem.
\newblock In {\em Proceedings of SODA}. ACM/SIAM, Jan 2010.

\bibitem{nesterov07}
{\relax{Yu}}.~Nesterov.
\newblock Rounding of convex sets and efficient gradient methods for linear
  programming problems.
\newblock {\em Optimization Methods Software}, 23(1):109--128, 2008.

\bibitem{nguyen}
P.~Q. Nguyen and T.~Vidick.
\newblock Sieve algorithms for the shortest vector problem are practical.
\newblock {\em Journal of Mathematical Cryptology}, 2(2), 2008.

\bibitem{sto96}
A.~Storjohann and G.~Labahn.
\newblock Asymptotically fast computation of {H}ermite normal forms of integer
  matrices.
\newblock In {\em Proceedings of the 1996 International Symposium on Symbolic
  and Algebraic Computation}, pages 259--266. ACM Press, 1996.

\bibitem{todd07}
M.~J. Todd and E.~A. Y{\i}ld{\i}r{\i}m.
\newblock {On Khachiyan's algorithm for the computation of minimum-volume
  enclosing ellipsoids}.
\newblock {\em Discrete Applied Mathematics}, 155(13):1731--1744, 2007.

\end{thebibliography}
\bibliographystyle{abbrv}
{\small
Robert Hildebrand: Department of Mathematics, University of California, Davis, One Shields Avenue, Davis, CA, 95616, USA\\
\textit{E-mail address:} {\tt{rhildebrand@math.ucdavis.edu}}\\

\noindent Matthias K\"oppe: Department of Mathematics, University of California, Davis, One Shields Avenue, Davis, CA, 95616, USA\\
\textit{E-mail address:} {\tt{mkoeppe@math.ucdavis.edu}}\\
}
\end{document}